\documentclass[11pt]{article}
\usepackage[margin =1in]{geometry}
\usepackage{amssymb,amsthm,amsmath}
\usepackage{enumerate}
\usepackage{hyperref}
\usepackage{cite}
\usepackage[capitalize]{cleveref}
\usepackage{enumitem}
\usepackage{color}
\usepackage{cancel}
\usepackage{pgf}
\usepackage{svg}
\usepackage{pgfplots}
\usepackage{import}

\usepackage[utf8]{inputenc} 
\usepackage[T1]{fontenc}    
\usepackage{hyperref}       
\usepackage{url}            
\usepackage{booktabs}       
\usepackage{amsfonts}       
\usepackage{nicefrac}       
\usepackage{microtype}
\usepackage{multirow}
\usepackage{xfrac}
\usepackage{todonotes}
\usepackage{titlesec}
\usepackage{caption}
\usepackage{subcaption}

\usepackage{multirow}

\usepackage{caption}
\usepackage{subcaption}
\usepackage{float}

\titleformat{\subsubsection}[runin]
{\normalfont\normalsize\bfseries}{\thesubsubsection}{1em}{}



\usepackage{graphicx}


\usepackage{appendix}
\numberwithin{equation}{section}

\newcommand{\inclu}[0] {\ar@{^{(}->}}




\newcommand{\argmin}{\operatornamewithlimits{argmin}}


\newtheorem{thm}{Theorem}[section]
\newtheorem{theorem}{Theorem}[section]

\newtheorem{proposition}[thm]{Proposition}

\newtheorem{lemma}[thm]{Lemma}

\crefname{claim}{claim}{claims}
\Crefname{claim}{Claim}{Claims}
\crefname{lem}{lemma}{lemmas}
\Crefname{lem}{Lemma}{Lemmas}
\crefname{algorithm}{algorithm}{algorithms}
\Crefname{algorithm}{Algorithm}{Algorithms}

\newtheorem{remark}{Remark}

\theoremstyle{remark}




\newcommand{\varSpace}{\mathcal{E}}



\usepackage{mathtools}

\usepackage[algo2e, boxruled]{algorithm2e}

\usepackage[T1]{fontenc}
\usepackage{lmodern}
\definecolor{blue}{gray}{0.0}

\begin{document}

    \title{Some Primal-Dual Theory for Subgradient\\ Methods for Strongly Convex Optimization}

    \author{Benjamin Grimmer\footnote{Johns Hopkins University, Department of Applied Mathematics and Statistics, \url{grimmer@jhu.edu}} \qquad Danlin Li\footnote{Johns Hopkins University, Department of Applied Mathematics and Statistics, \url{dli91@alumni.jh.edu}}}

	\date{}
	\maketitle

	\begin{abstract}
        We consider (stochastic) subgradient methods for strongly convex but potentially nonsmooth non-Lipschitz optimization. We provide new equivalent dual descriptions (in the style of dual averaging) for the classic subgradient method, the proximal subgradient method, and the switching subgradient method. These equivalences enable $O(1/T)$ convergence guarantees in terms of both their classic primal gap and a not previously analyzed dual gap for strongly convex optimization. Consequently, our theory provides these classic methods with simple, optimal stopping criteria and optimality certificates at no added computational cost. Our results apply to a wide range of stepsize selections and to non-Lipschitz ill-conditioned problems where the early iterations of the subgradient method may diverge exponentially quickly (a phenomenon which, to the best of our knowledge, no prior works address). Even in the presence of such undesirable behaviors, our theory still ensures and bounds eventual convergence.
	\end{abstract}

    \section{Introduction}

The study of gradient methods for iteratively solving nonsmooth convex minimization problems dates back to as early as the 60s, see~\cite{Shor:Subgradient}. In recent decades, interest in first-order methods for optimization has resurged in popularity throughout data science and machine learning domains due to their low iteration cost and scalability. This has led to the development of a range of new gradient methods~\cite{Nedic2001,Nesterov2005PrimaldualSM,beck2009fast,Yang2015RSGBS,Grimmer2017RadialSM,Johnstone2020,Lu2017RelativeCF,Renegar2022}. 
Here, we instead focus on improving performance guarantees for classic subgradient methods, the natural extensions of gradient descent to nonsmooth settings.

We consider general convex minimization problems of the following form
\begin{equation}
    p_\star = \begin{cases}
        \min & f_0(x) + r(x)\\
        \mathrm{s.t.} & f_s(x) \leq 0 \qquad \forall s=1\dots m
    \end{cases}\label{eq:super-problem}
\end{equation}
where the functions $f_s\colon \mathcal{E}\rightarrow \mathbb{R}$, $s=0,\dots, m,$ are (strongly) convex but may be nonsmooth and not globally Lipschitz continuous and $r\colon \mathcal{E}\rightarrow \mathbb{R}\cup\{\infty\}$ is convex, closed, and simple, all defined over a finite-dimensional Euclidean space $\varSpace$. We will consider iteratively solving problems of this general form via a stochastic switching proximal subgradient method. This general method corresponds to the classic subgradient method when $r=0$ and $m=0$, the proximal subgradient method when $m=0$, and the switching subgradient method of~\cite{Polyak1967} when $r=0$. Formal assumptions and definitions are deferred to Section~\ref{sec:prelim}. 

This work provides equivalent dual descriptions and new primal-dual convergence rates for all of these classic subgradient methods.
Although our theory will be developed for stochastic, non-Lipschitz problems with $r\neq 0$ and $m\neq 0$, we first briefly present our results without these generalities to showcase and introduce the key ideas.

\noindent {\bf The Classic Setting: Primal-Dual Theory for the Subgradient Method.}\\
Supposing $r=0$ and $m=0$, the problem~\eqref{eq:super-problem} reduces to unconstrained minimization of $f_0\colon \varSpace\rightarrow \mathbb{R}$. We assume access to an oracle capable of producing a subgradient at each iteration
$g_0(x)\in \partial f_0(x) := \{  g \mid f_0(y) \geq f_0(x) + \langle g , y-x \rangle \quad \forall y\in\varSpace\}$. Note when $f_0$ is $\mu$-strongly convex (that is, $f(x)-\frac{\mu}{2}\|x\|_2^2$ is convex), each subgradient $g_0(x)\in \partial f_0(x)$, provides a quadratic lower bound
\begin{equation}
    f_0(y) \geq f_0(x) + \langle g_0(x) , y-x \rangle +\frac{\mu}{2}\|y-x\|_2^2 \quad \forall y\in\varSpace \ .
\label{eq:subgradient-lower-bound}
\end{equation}

To develop a primal-dual understanding, we consider the following reformulation
\begin{align*}
    p_\star = \min_{x\in\varSpace} f_0(x) &= \min_{x\in\varSpace} \max_{y\in\varSpace} f_0(y) + \langle g_0(y), x-y \rangle + \frac{\mu}{2}\|x-y\|_2^2\\
    &=\min_{x\in\varSpace}\max_{\substack{y_0,\dots y_T\in\varSpace \\ \lambda_0,\dots \lambda_T\geq 0 \\ \sum_{k=0}^T \lambda_k >0}} \frac{\sum_{k=0}^T \lambda_k\left(f_0(y_k) + \langle g_0(y_k), x-y_k \rangle + \frac{\mu}{2}\|x-y_k\|_2^2\right)}{\sum_{k=0}^T \lambda_k}
\end{align*}
where the first line replaces $f_0$ by the maximum of its quadratic subgradient lower bounds and the second extends this to the maximum combination of such lower bounds.
Any point $x\in\varSpace$ provides a ``primal solution'' with primal gap $f_0(x) - p_\star$. Any collection of points $y_k$ and weights $\lambda_k$ provide a ``dual solution'', which produces a lower bound of $m^{(T)}(x) = \frac{\sum_{k=0}^T \lambda_k\left(f_0(y_k) + \langle g_0(y_k), x-y_k \rangle + \frac{\mu}{2}\|x-y_k\|_2^2\right)}{\sum_{k=0}^T \lambda_k}$ {\color{blue} on $f_0(x)$} and a dual gap $p_\star - \inf m^{(T)}$.

The subgradient method builds a sequence of primal solutions $\{x_k\}$ by repeatedly moving in negative subgradient directions using a sequence of stepsizes $\alpha_k>0$
\begin{equation}
    x_{k+1} = x_k - \alpha_k g_0(x_k) \ .  \label{eq:basic-subgradient-method}
\end{equation}
The method of dual averaging builds dual solutions yielding models $m^{(k)}$ by repeatedly minimizing this lower bounding model (with an optional $\beta_k$ regularization term) and then incorporating a new subgradient $g_{0}(y_{k+1})$ into the next model from the resulting point $y_{k+1}$ with weight $\lambda_{k+1}>0$
\begin{equation}
    \begin{cases}
        m^{(k)}(y) = \dfrac{\sum_{i=0}^k \lambda_i\left(f_0(y_i) + \langle g_0(y_i), y-y_i \rangle + \frac{\mu}{2}\|y-y_i\|_2^2\right)}{\sum_{i=0}^k \lambda_i}\\
        y_{k+1} = \argmin\left\{ m^{(k)}(y) + \dfrac{\beta_k}{2\sum_{i=0}^k \lambda_i}\|y-y_0\|_2^2 \right\}\ . 
    \end{cases}
\end{equation}
If one sets $\mu=0$, this corresponds to the dual averaging method of Nesterov~\cite{Nesterov2005PrimaldualSM}. Other variations of dual averaging from the literature are discussed in Section~\ref{subsec:related}.

In the not strongly convex setting of $\mu=0$, Nesterov~\cite{Nesterov2005PrimaldualSM} showed when $\beta_k=\bar\beta>0$ is constant, these two methods are equivalent whenever $\alpha_k = \lambda_k/\bar\beta$. That is, they produce the same sequence of iterates $x_k=y_k$. Our Theorem~\ref{thm:super-equivalence} extends this equivalence to potentially strongly convex settings, showing these two methods are equivalent whenever $\alpha_k = \lambda_k/(\mu\sum_{i=0}^k\lambda_i +\bar\beta)$ and $\mu + \bar\beta>0$. Such results allow one to equivalently view the classic subgradient method as either iteratively building a primal solution converging down to optimal or building a lower bound converging up to optimal.

This new dual understanding enables us to develop primal-dual convergence theory for the subgradient method. We {\color{blue} derive convergence rate bounds ensuring} the dual model built by the subgradient method converges up to optimal at {\color{blue} at least} the same rate as the {\color{blue} primal} iterates {\color{blue} are guaranteed to} converge down. For example, if $\bar\beta=0$ and $\|g_0(x_k)\|_2 \leq M$ uniformly, a special case of our Theorem~\ref{thm:super-rate} implies for any selection of $\alpha_k$ and $\lambda_k$ with $\alpha_k = \lambda_k/(\mu\sum_{i=0}^k\lambda_i)$, the subgradient method (or equivalently dual averaging) has primal gap, dual gap, and distance to optimality all converge with
\begin{equation}
    \underbrace{\frac{\sum_{k=0}^T \lambda_k f(x_k)}{\sum_{k=0}^T \lambda_k} - p_\star}_{\substack{\mathtt{Primal\ Gap}}} + \underbrace{p_\star - \inf m^{(T)}}_{\substack{\mathtt{Dual\ Gap}}} + \underbrace{\frac{\mu}{2}\|x_{T+1}-x_\mathtt{OPT}\|_2^2}_{\substack{\mathtt{Distance\ To\ Optimal}}} \leq M^2 \frac{\sum_{k=0}^T \lambda_k\alpha_k}{\sum_{k=0}^T \lambda_k} \ . \label{eq:easy-bound}
\end{equation}
Selecting dual weights $\lambda_k = k+1$, corresponding to primal stepsizes $\alpha_k = 2/\mu(k+2)$ as considered in~\cite[Section 3.2]{Lacoste-Julien2012}, this recovers and extends their optimal primal rate $O(M^2/\mu T)$ to have a matching dual term. This dual theory enables a computable optimal stopping criteria (assuming $\mu$ is known) as a primal-dual gap $\frac{\sum_{k=0}^T \lambda_k f_0(x_k)}{\sum_{k=0}^T \lambda_k} - \inf m^{(T)}$ less than $\epsilon$ occurs within $O(1/\epsilon)$ steps ensuring both primal and dual accuracies of at least $\epsilon$. To the best of our knowledge, no such criterion has been known, even for the classic subgradient method.

Given our primal-dual equivalence, our results can be equally seen as providing primal-dual convergence guarantees for dual averaging. In this sense, we improve the prior best dual averaging theory due to Deng et al.~\cite[Corollary 8]{Deng2015RandomizedBS} who showed a primal rate of $O\left(M^2/\mu T\right)$
when $\lambda_k = k+1$ and $\beta_k =\mu(T+2)$ is constant. Note such rates are optimal (by the example presented in~\cite{Nesterov-Introductory-Lectures}), meaning no faster objective gap convergence rate in terms of any of $M, \mu, T$ can be achieved.

\subsection{Our Contributions}
This work develops primal-dual equivalences and convergence theory beyond the above classic subgradient method setting. We consider the general problem~\eqref{eq:super-problem} including an additive composite objective, functional constraints, and stochastic subgradients.
\begin{itemize}
    \item {\bf Dual Equivalences and Primal-Dual Convergence Theory.} Our theory considers a {\it Stochastic Switching Proximal Subgradient Method} for the general problem class~\eqref{eq:super-problem}, which, as special cases, captures projected, proximal, and switching subgradient methods as well as gradient descent. We introduce a new dual averaging method for this general problem class, {\it Stochastic Lagrangian Proximal Dual Averaging}, which our Theorem~\ref{thm:super-equivalence} shows is equivalent to the stochastic switching proximal subgradient method under proper selection of primal stepsizes $\alpha_k$ and dual weights $\lambda_k$ and $\beta_k$. From this equivalence, our Theorem~\ref{thm:super-rate} presents new primal-dual convergence rate guarantees for these general methods.

    \item {\bf Computable Stopping Criteria.} Our theory identifies dual certificates implicitly built by the range of considered subgradient methods. These certificates enable new computable stopping criteria (assuming $\mu$ is known), halting once the primal-dual gap is at most a target accuracy $\epsilon$. The associated Lagrange multipliers may further be valuable when the subgradient method is used as a subroutine of a larger computation. 
    
    \item {\bf New Non-Lipschitz Analysis Bounds for Early Divergence Phenomena.}
    Often, nonsmooth optimization analysis focuses on Lipschitz continuous functions. Such theory is limited to functions that asymptotically grow at most linearly. Our analysis uses a non-Lipschitz model, allowing up to quadratic growth. By doing so, our theory provides new linear primal-dual convergence guarantees for gradient descent in smooth optimization and new guarantees for minimizing a sum $f_0=h_1+h_2$ with smooth $h_1$ and nonsmooth but Lipschitz $h_2$, which is overall neither Lipschitz nor smooth. Numerics showcasing highly non-monotone behaviors of subgradient methods on such problems are shown in Section~\ref{sec:numerics}, where our theory still provides reasonably accurate predictions.
\end{itemize}

\noindent {\bf Outline.} Section~\ref{sec:prelim} introduces the considered primal and dual subgradient methods, related literature, and the assumptions needed for our theory. Section~\ref{sec:results} states and proves our equivalence between these primal and dual perspectives and our improved primal-dual guarantees. Finally, Section~\ref{sec:numerics} concludes with some numerical validation.
    \section{Preliminaries and Algorithm Definitions}
\label{sec:prelim}
Recall we are interested in the family of problems
\begin{equation*}
    \begin{cases}
        \min & f_0(x) + r(x)\\
        \mathrm{s.t.} & f_s(x) \leq 0 \qquad \forall s=1\dots m \ . 
    \end{cases}
\end{equation*}
This extends the previously discussed classic subgradient method setting in three ways.

First, we allow for additive composite objectives $f_0+r$ for any {\color{blue} proper} closed convex $r\colon \varSpace\rightarrow \mathbb{R}\cup\{\infty\}$. We address this added term by assuming $r$ is sufficiently simple that its proximal operator
$ \mathrm{prox}_{\alpha_k, r}(z) := \argmin_x\left\{r(x) + \frac{1}{2\alpha_k}\|x-z\|_2^2\right\} $
can be evaluated at each iteration. For example, if $r$ is an indicator function for a closed convex constraint set, its proximal operator corresponds to projection onto that set. Since $r(x) + \frac{1}{2\alpha_k}\|x-z\|_2^2$ is strongly convex, it has a unique minimizer described by the following equivalence
\begin{equation}
    z_+ = \mathrm{prox}_{\alpha_k, r}(z) \quad \iff \quad \frac{1}{\alpha_k}(z-z_+)\in \partial r(z_+) \ . \label{eq:proximal-equivalence}
\end{equation}
Second, we allow for $m$ strongly convex functional constraints $f_s(x)\leq 0$ for $s=1,\dots, m$. We address these added terms by ``switching'': Given a current iterate $x$, only one function $f_{s(x)}$ will be considered in that iteration. This function will be chosen as $s(x)=0$ if $x$ is feasible (that is, $f_s(x)\leq 0$ for all $s=1\dots m$), otherwise $s(x)$ can be chosen generically as any violated constraint $f_{s(x)}(x)>0$.
Third, we allow for stochasticity in our subgradient oracles for each function, denoted by $g_s(x;\xi)$ such that $\mathbb{E}_\xi g_{s}(x;\xi) \in \partial f_{s}(x)$. Note this trivially captures deterministic methods by selecting $g_s(x;\xi)$ constant with respect to $\xi$.

Below, we introduce our considered primal and dual subgradient methods for solving such problems. We introduce these using disjoint notations but will in Theorem~\ref{thm:super-equivalence} show these are, in fact, the same algorithm in that their iterate sequences are identical.

\noindent {\bf A General Primal Subgradient Method.}
As a primal algorithm, consider the {\it Stochastic Switching Proximal Subgradient Method} with stepsizes $\alpha_k>0$ and sequence of iterates $\{x_k\}$ defined by
\begin{equation}\label{eq:super-primal-method}
    x_{k+1} = \begin{cases}
        \mathrm{prox}_{\alpha_k, r} (x_k-\alpha_k g_{0}(x_k;\xi_k)) & \text{if $x_k$ is feasible}\\
        x_k-\alpha_k g_{s(x_k)}(x_k;\xi_k) & \text{otherwise.}        
    \end{cases}
\end{equation}
for i.i.d.~sampled $\xi_k$. Throughout, we always assume $\alpha_k>0$, strictly.
When $m=0$, this is the standard (stochastic) proximal subgradient method; when $r=0$, this is the (stochastic) switching subgradient method. Note when $m>0$, only limited stochasticity can be allowed since an exact determination of feasibility is required to decide the switching variable $s(x_k)$.


\noindent {\bf A General Dual Subgradient Method.} To give a dual approach to solving~\eqref{eq:super-problem}, consider the following equivalent minimax formulation
\begin{align*}
    p_\star &= \min_{x\in\varSpace}\max_{u_1,\dots u_s\geq 0} f_0(x) + \sum_{s=1}^m u_sf_s(x) + r(x) \\
    &=\min_{x\in\varSpace}\max_{\substack{y_0,\dots, y_{T}\in\varSpace\\ \lambda_0,\dots, \lambda_{T-1}\geq 0 \\ \sum_{k<T\colon s(y_k)=0} \lambda_k >0\\ n_{k+1}\in\partial r(y_{k+1})}} \mathbb{E}_{\xi}\Bigg(\sum_{s=0}^m\sum_{\substack{k<T \\ s(y_k)=s}} \lambda_k\left(f_{s(y_k)}(y_k) + \langle g_{s(y_k)}(y_k; \xi_k), x-y_k \rangle + \frac{\mu}{2}\|x-y_k\|_2^2\right)\\
    &\qquad\qquad\qquad\qquad\qquad\qquad +\sum_{\substack{k<T\\ s(y_k)=0}} \lambda_k\left(r(y_{k+1}) + \langle n_{k+1}, x - y_{k+1}\rangle\right)\Bigg)\frac{1}{\sum_{k<T\colon s(y_k)=0} \lambda_k}
\end{align*}
where the first equality is the standard primal Lagrangian formulation and the second equality replaces each function by a combination of its lower bounds. Hence, any selection of values for $y_k,\lambda_k, n_k$ gives a dual solution and a lower bound on $p_\star$.

As a dual algorithm, consider the following {\it Stochastic Lagrangian Proximal Dual Averaging} with dual weights $\lambda_k>0$ and regularization parameters $\beta_k\geq 0$. Throughout, we always assume $\lambda_k>0$, strictly. Its sequence of iterates $\{y_k\}$ based on i.i.d.~sampled $\xi_k$ is defined as follows: At iteration $k$, construct the following (unnormalized) aggregations for each function based on the previous $k-1$ iterations as
\begin{align*}
    F^{(k-1)}_s(y) &:= \sum_{i<k \colon  s(y_i)=s} \lambda_i \left(f_{s(y_i)}(y_i) + \langle g_{s(y_i)}(y_i;\xi_i), y-y_i\rangle + \frac{\mu}{2}\|y-y_i\|_2^2\right)\\
    R^{(k-1)}(y) &:= \sum_{i<k \colon  s(y_i)=0} \lambda_i \left(r(y_{i+1}) + \langle n_{i+1}, y-y_{i+1}\rangle\right)\\
    M^{(k-1)}(y) &:= \sum_{s=0}^m F^{(k-1)}_s(y) + R^{(k-1)}(y)
\end{align*}
where $n_{i+1} = \frac{-1}{\lambda_i}(\nabla M^{(i-1)}(y_{i+1}) + \lambda_i(g_0(y_i;\xi_i) +\mu(y_{i+1}-y_i)) + \beta_i(y_{i+1} - y_0)) \in \partial r(y_{i+1})$ (see Lemma~\ref{lem:Mk-minimization} for verification of this subdifferential containment).
At iteration $k=0$, these empty summations are understood to take value zero.
Then, based on the switching selection $s(y_k)$, a new weighted model is constructed as
$$ U^{(k)}(y) := \begin{cases} \lambda_k\left(f_{0}(y_k) + \langle g_0(y_k;\xi_k), y-y_k\rangle + \frac{\mu}{2}\|y-y_k\|_2^2 + r(y)\right) & \text{if $y_k$ is feasible}\\
\lambda_k\left(f_{s(y_k)}(y_k) + \langle g_{s(y_k)}(y_k;\xi_k), y-y_k\rangle + \frac{\mu}{2}\|y-y_k\|_2^2\right) & \text{otherwise.}
\end{cases} $$
The Lagrangian proximal dual averaging method then iterates by minimizing the aggregation of past models $M^{(k-1)}$ plus the new model $U^{(k)}$ (and an optional extra regularization term)
\begin{equation}\label{eq:super-dual-method}
    y_{k+1} = \argmin\left\{M^{(k-1)}(y) + U^{(k)}(y) +\frac{\beta_k}{2}\|y-y_0\|_2^2 \right\} \ .
\end{equation}

Note our definitions for the updated aggregate model $M^{(k)}$ are chosen such that $y_{k+1}$ will also be the unique minimizer of $M^{(k)}(y)+\frac{\beta_k}{2}\|y-y_0\|_2^2$. 
\begin{lemma}\label{lem:Mk-minimization}
    $y_{k+1}$ is the unique minimizer of $M^{(k)}(y)+\frac{\beta_k}{2}\|y-y_0\|_2^2$.
\end{lemma}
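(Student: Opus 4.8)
The plan is to split on whether $y_k$ is feasible, since this governs whether the new model $U^{(k)}$ carries the nonsmooth term $\lambda_k r$. In both cases I will first record that $M^{(k)}$ is a smooth quadratic: each $F^{(k)}_s$ is quadratic, while $R^{(k)}$ is affine in $y$ because every summand $\lambda_i(r(y_{i+1}) + \langle n_{i+1}, y - y_{i+1}\rangle)$ is linear in $y$. Hence $M^{(k)}(y) + \frac{\beta_k}{2}\|y-y_0\|_2^2$ is smooth, and its quadratic curvature $\mu\sum_{i=0}^k\lambda_i + \beta_k$ matches that of the strongly convex subproblem in \eqref{eq:super-dual-method} defining $y_{k+1}$, so it too is strongly convex with a unique minimizer. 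It therefore suffices to verify the stationarity condition $\nabla\big(M^{(k)} + \tfrac{\beta_k}{2}\|\cdot - y_0\|_2^2\big)(y_{k+1}) = 0$.

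The infeasible case $s(y_k) \neq 0$ is immediate: no index is appended to $R$ and $U^{(k)}$ contains no $r$ term, so $M^{(k)} = M^{(k-1)} + U^{(k)}$ identically. Thus $M^{(k)}(y) + \frac{\beta_k}{2}\|y-y_0\|_2^2$ coincides with the objective of the subproblem \eqref{eq:super-dual-method} whose unique argmin is $y_{k+1}$, and the claim follows directly.

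The feasible case $s(y_k) = 0$ is where the work lies, and it is where I will simultaneously verify the asserted containment $n_{k+1} \in \partial r(y_{k+1})$. Here $y_{k+1}$ minimizes $\phi(y) := M^{(k-1)}(y) + U^{(k)}(y) + \frac{\beta_k}{2}\|y-y_0\|_2^2$, whose only nonsmooth piece is $\lambda_k r$. Fermat's rule together with the subdifferential sum rule (valid as the remaining terms are finite-valued and smooth) gives $0 \in \nabla(\phi - \lambda_k r)(y_{k+1}) + \lambda_k\partial r(y_{k+1})$. Computing $\nabla(\phi - \lambda_k r)(y_{k+1}) = \nabla M^{(k-1)}(y_{k+1}) + \lambda_k(g_0(y_k;\xi_k) + \mu(y_{k+1}-y_k)) + \beta_k(y_{k+1}-y_0)$ and solving for the required subgradient recovers exactly the stated formula for $n_{k+1}$, thereby establishing $n_{k+1} \in \partial r(y_{k+1})$.

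Finally I will transfer this stationarity to $M^{(k)}$. Passing from $M^{(k-1)} + U^{(k)}$ to $M^{(k)}$ amounts to replacing the nonsmooth term $\lambda_k r(y)$ by its linearization $\lambda_k(r(y_{k+1}) + \langle n_{k+1}, y - y_{k+1}\rangle)$ at $y_{k+1}$, whose gradient is the constant $\lambda_k n_{k+1}$. Hence $\nabla\big(M^{(k)} + \tfrac{\beta_k}{2}\|\cdot - y_0\|_2^2\big)(y_{k+1}) = \nabla(\phi - \lambda_k r)(y_{k+1}) + \lambda_k n_{k+1} = 0$ by the definition of $n_{k+1}$, and strong convexity then pins $y_{k+1}$ as the unique minimizer. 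I expect the main obstacle to be the bookkeeping in the feasible case — correctly isolating the smooth part of $\phi$, justifying the sum rule, and matching the gradient of the linearized term to $\lambda_k n_{k+1}$ — rather than any conceptually deep step.
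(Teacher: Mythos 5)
Your proposal is correct and follows essentially the same route as the paper's proof: split on feasibility of $y_k$, note the infeasible case is immediate since $M^{(k)} = M^{(k-1)} + U^{(k)}$, and in the feasible case use the optimality condition of the subproblem \eqref{eq:super-dual-method} to identify $n_{k+1}$ as the certifying element of $\partial r(y_{k+1})$, so that replacing $\lambda_k r$ by its linearization at $y_{k+1}$ preserves stationarity. Your write-up is somewhat more explicit than the paper's about why $M^{(k)}+\frac{\beta_k}{2}\|\cdot-y_0\|_2^2$ is a strongly convex quadratic (hence stationarity implies unique minimization), but the substance is identical.
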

\begin{proof}
First note that when $y_k$ is infeasible (and so $s(y_k)\neq 0$), $M^{(k)} = M^{(k-1)} + U^{(k)}$. From this, the result is immediate.
Now consider when $y_k$ is feasible (and so $s(y_k)=0$). Since $y_{k+1}$ is the unique minimizer of $M^{(k-1)}(y)+U^{(k)}(y)+\frac{\beta_k}{2}\|y-y_0\|_2^2$, the necessary and sufficient optimality condition ensures $y_{k+1}$ is the unique solution to
$$ 0 \in \nabla M^{(k-1)}(y_{k+1}) + \lambda_k(g_0(y_k; \xi_k) + \mu(y_{k+1}-y_k) + \partial r(y_{k+1})) + \beta_k(y_{k+1}-y_0) \ . $$
Rewriting this as $ \nabla M^{(k-1)}(y_{k+1}) + \lambda_k (g_0(y_k; \xi_k)+\mu(y_{k+1}-y_k)) + \beta_k(y_{k+1}-y_0) \in -\lambda_k \partial r(y_{k+1})$, we see that $n_{k+1}$ is the element of $\partial r(y_{k+1})$ certifying the minimization's optimality. Therefore
\begin{align*}
    0 &= \nabla M^{(k-1)}(y_{k+1}) + \lambda_k(g_0(y_k; \xi_k) + \mu(y_{k+1}-y_k)+n_{k+1}) + \beta_k(y_{k+1}-y_0)\\
    &= \nabla M^{(k)}(y_{k+1}) + \beta_k(y_{k+1}-y_0)
\end{align*}
and so $y_{k+1}$ is a the unique minimizer of $M^{(k)}(y)+\frac{\beta_k}{2}\|y-y_0\|_2^2$. 

\end{proof}

Note whenever $s(y_k)=0$, the step~\eqref{eq:super-dual-method} corresponds to minimizing $r$ plus a simple quadratic. This amounts to computing a proximal operator for $r$ and so is within the assumed computational oracle model. Whenever $s(y_k)\neq 0$, the step~\eqref{eq:super-dual-method} minimizes a simple quadratic and can be done in closed form. The following easily verifiable lemmas provide a simple way to maintain the minimum of the aggregate quadratic model.

\begin{lemma}\label{lem:standard-quadratic}
    Any quadratic function of the form $Q(z) = c + \langle d,z\rangle + \frac{b}{2}\|z-\hat z\|^2_2$ with $b>0$ is equal to $Q(z) = \min Q + \frac{b}{2}\|z - \argmin Q\|^2_2$.
\end{lemma}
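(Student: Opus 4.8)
The plan is to prove this by completing the square, exploiting that $Q$ has constant Hessian equal to $bI$. Since $b>0$, the function $Q$ is $b$-strongly convex and hence admits a unique minimizer; denote it $z^\star = \argmin Q$ and write $\min Q = Q(z^\star)$. First I would compute the gradient $\nabla Q(z) = d + b(z-\hat z)$, which is affine in $z$, and set it to zero to identify $z^\star = \hat z - d/b$. The explicit formula for $z^\star$ is not strictly needed for the conclusion; all that matters downstream is that $\nabla Q(z^\star)=0$.

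Next, because $Q$ is a quadratic whose Hessian is the constant $bI$, its second-order Taylor expansion about any base point is exact. Expanding about $z^\star$ gives $Q(z) = Q(z^\star) + \langle \nabla Q(z^\star), z-z^\star\rangle + \frac{b}{2}\|z - z^\star\|_2^2$. Substituting $\nabla Q(z^\star)=0$ and $Q(z^\star)=\min Q$ then yields exactly the claimed identity $Q(z) = \min Q + \frac{b}{2}\|z - \argmin Q\|_2^2$.

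There is essentially no obstacle here: the only point to verify is the exactness of the Taylor expansion, which is immediate for a quadratic with constant Hessian. Equivalently, one can avoid Taylor's theorem entirely and simply expand both $\frac{b}{2}\|z-\hat z\|_2^2$ and $\frac{b}{2}\|z - z^\star\|_2^2$ into their constant, linear, and quadratic parts in $z$ and match coefficients, using $z^\star = \hat z - d/b$ to check that the linear terms agree and that the constants differ by exactly $\min Q - c$. Either route is a short, routine computation.
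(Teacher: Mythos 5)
Your proof is correct: since $b>0$ the unique minimizer $z^\star=\hat z - d/b$ satisfies $\nabla Q(z^\star)=0$, and the exact second-order expansion of a quadratic with Hessian $bI$ about $z^\star$ gives precisely $Q(z)=\min Q + \frac{b}{2}\|z-z^\star\|_2^2$. The paper itself offers no proof of this lemma (it is labeled ``easily verifiable''), and your completing-the-square argument is exactly the routine verification the authors had in mind.
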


\begin{lemma}\label{lem:quad-calc}
    The sum of quadratics $Q_i(z) = a_i + \frac{b_i}{2}\|z-z_i\|_2^2$ for $i=1,2$ with $b_i > 0$ equals
    $$ (Q_1+Q_2)(z) = a_{1+2} + \frac{b_{1+2}}{2}\|z-z_{1+2}\|_2^2$$
    where $a_{1+2} = a_1 + a_2 + \frac{b_1b_2}{2(b_1+b_2)}\|z_1-z_2\|_2^2$, $b_{1+2} = b_1+b_2$, and $z_{1+2} = \frac{b_1}{b_1+b_2}z_1 + \frac{b_2}{b_1+b_2}z_2$.
\end{lemma}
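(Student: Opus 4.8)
The plan is to verify the claimed identity by direct computation, leveraging the completing-the-square statement of Lemma~\ref{lem:standard-quadratic}. Writing out the sum gives $(Q_1+Q_2)(z) = a_1 + a_2 + \frac{b_1}{2}\|z-z_1\|_2^2 + \frac{b_2}{2}\|z-z_2\|_2^2$, whose pure-quadratic coefficient is $\frac{b_1+b_2}{2}\|z\|_2^2$. Since $b_1, b_2 > 0$, the curvature $b_1+b_2$ is strictly positive, so $Q_1+Q_2$ is a strictly convex quadratic of exactly the form handled by Lemma~\ref{lem:standard-quadratic}; this already identifies $b_{1+2} = b_1 + b_2$.

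First I would locate the unique minimizer by setting the gradient to zero: $\nabla(Q_1+Q_2)(z) = b_1(z-z_1) + b_2(z-z_2) = (b_1+b_2)z - (b_1 z_1 + b_2 z_2)$, which vanishes precisely at $z_{1+2} = \frac{b_1}{b_1+b_2}z_1 + \frac{b_2}{b_1+b_2}z_2$, matching the claimed center. Applying Lemma~\ref{lem:standard-quadratic} then gives $(Q_1+Q_2)(z) = \min(Q_1+Q_2) + \frac{b_1+b_2}{2}\|z - z_{1+2}\|_2^2$, so it only remains to identify the constant $a_{1+2} = \min(Q_1+Q_2) = (Q_1+Q_2)(z_{1+2})$.

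To evaluate this constant, I would use the displacements $z_{1+2} - z_1 = \frac{b_2}{b_1+b_2}(z_2-z_1)$ and $z_{1+2} - z_2 = \frac{b_1}{b_1+b_2}(z_1-z_2)$, so that $\frac{b_1}{2}\|z_{1+2}-z_1\|_2^2 + \frac{b_2}{2}\|z_{1+2}-z_2\|_2^2 = \frac{b_1 b_2^2 + b_2 b_1^2}{2(b_1+b_2)^2}\|z_1-z_2\|_2^2 = \frac{b_1 b_2}{2(b_1+b_2)}\|z_1-z_2\|_2^2$. Adding $a_1 + a_2$ yields exactly the claimed $a_{1+2}$.

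The computation carries no real obstacle — the lemma is a bookkeeping identity — so the only care needed is confirming the strict positivity of the curvature (guaranteed by $b_i > 0$) so that the minimizer is unique and Lemma~\ref{lem:standard-quadratic} applies; the remainder is the short algebra above. Alternatively one could skip the minimization and simply expand both sides in the monomial basis $\{\|z\|_2^2,\ \langle z, \cdot\rangle,\ 1\}$ and match coefficients, but routing through the minimizer makes the center and curvature formulas transparent and isolates the single scalar that must be checked.
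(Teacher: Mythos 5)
Your proof is correct. The paper itself offers no proof of this lemma---it is stated as ``easily verifiable''---and your argument (identify the curvature $b_1+b_2$, find the minimizer by setting the gradient to zero, evaluate the minimum via the displacements $z_{1+2}-z_1 = \frac{b_2}{b_1+b_2}(z_2-z_1)$ and $z_{1+2}-z_2 = \frac{b_1}{b_1+b_2}(z_1-z_2)$, then invoke Lemma~\ref{lem:standard-quadratic}) is exactly the intended verification, consistent with how the paper later uses Lemmas~\ref{lem:standard-quadratic} and~\ref{lem:quad-calc} together in the proof of Lemma~\ref{lem:switching-proximal-dual-induction}.
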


\subsection{Related Work}\label{subsec:related}
\noindent {\bf More General Distance Terms in Dual Minimization.} Nesterov's original development~\cite{Nesterov2005PrimaldualSM} and most of the subsequent literature have considered a slightly different model for dual averaging than discussed here. To the best of our knowledge, no previous dual averaging methods handled functional constraints. Instead, they fix $m=0$. Prior works have primarily fixed $\mu=0$ (not utilizing the quadratic improvement in lower bound quality from strong convexity) but allowed a more generic distance function in the second term of dual averaging's objective to be minimized at each step. The ``standard'' dual averaging iteration for unconstrained minimization of $f_0$ is then
\begin{equation}\label{eq:distance-dual-method}
    \begin{cases}
    m^{(k)}(x) = \frac{\sum_{i=0}^k \lambda_i \left(f(x_i) + \langle g_0(x_i;\xi_i), x-x_i \rangle\right)}{\sum_{i=0}^k \lambda_i}\\
    x_{k+1} = \argmin\left\{m^{(k)}(x) + \frac{\beta_{k}}{2\sum_{i=0}^k \lambda_i}d(x)\right\}
    \end{cases}
\end{equation}
for any $\rho$-strongly convex $d(x)$. Our equivalent dual perspective fundamentally relies on these improvements in the subgradient lower bounds and the distance function both being quadratics $\|x-x_i\|_2^2$ and, as a result, are directly relatable. We do not expect our theory to generalize easily for more generic distance functions.

\noindent {\bf Regularized Dual Averaging.} A closely related method to the proximal subgradient method is {\it Regularized Dual Averaging} proposed by Xiao~\cite{Xiao2009} and further extended by~\cite{Chen2012,Jumutc2014ReweightedL2,Deng2015RandomizedBS,Siegel2019ExtendedRD,Tan2020}. This method applies to unconstrained additive composite problems minimizing $f_0+r$ by iterating
\begin{equation}
    \begin{cases}
        m^{(k)}(x) = \frac{\sum_{i=0}^k \lambda_i \left(f_0(x_i) + \langle  g_0(x_i;\xi_i), x-x_i\rangle  + \frac{\mu}{2}\|x-x_i\|_2^2\right)}{\sum_{i=0}^k \lambda_i} + r(x) \\
        x_{k+1} \in \argmin\left\{ m^{(k)}(x) + \frac{\beta_{k}}{2 \sum_{i=0}^k \lambda_i}\|x-x_0\|_2^2\right\} \ .
    \end{cases} \label{eq:regularized-dual-avg}
\end{equation}
This method's original development in~\cite{Xiao2009} fixed $\mu=0$ but allowed for a more general distance function, as discussed above. Based on our Theorem~\ref{thm:super-equivalence}, regularized dual averaging can be seen as a natural improvement on the proximal subgradient method. Regularized Dual Averaging utilizes $r$ entirely in its model function, whereas our equivalent dual description of the proximal subgradient method (\eqref{eq:super-dual-method} specialized to this case, $m=0$) uses the mixture of subgradient lower bounds and $r$, iterating
\begin{equation}
    \begin{cases}
        m^{(k)}(x) = \frac{\sum_{i=0}^k \lambda_i \left(f_0(x_i) + \langle  g_0(x_i;\xi_i), x-x_i\rangle  + \frac{\mu}{2}\|x-x_i\|_2^2\right)}{\sum_{i=0}^k \lambda_i} + \frac{\sum_{i=0}^{k-1} \lambda_i \left(r(x_{i+1}) + \langle  n_{i+1}, x-x_{i+1}\rangle\right) + \lambda_k r(x)}{\sum_{i=0}^k \lambda_i} \\
        x_{k+1} \in \argmin\left\{ m^{(k)}(x) + \frac{\beta_{k}}{2 \sum_{i=0}^k \lambda_i}\|x-x_0\|_2^2\right\} \ .
    \end{cases} \label{eq:proximal-dual-avg}
\end{equation}

\noindent {\bf Switching Subgradient Method Guarantees.} Convergence rate guarantees for the switching subgradient method of~\cite{Polyak1967} have been extensively studied for convex Lipschitz minimization~\cite{Lan2020Switching,Bayandina2018,Alkousa2020,Titov2020} and more recently for non-Lipschitz settings~\cite{jia2023firstorder}. Our theory extends this prior theory to give matching dual bounds and identifies Lagrange multipliers $u_s = \frac{\sum_{k: s(y_k) = s} \lambda_k}{\sum_{k: s(y_k) = 0} \lambda_k}$ implicitly built by this classic method (at no added cost). Recently, nonconvex guarantees were developed by~\cite{huang2023singleloop} but are beyond our scope.

\noindent {\bf Convex Conjugate-type Convergence Analysis.} The recent series of works of Pe\~na and Gutman~\cite{Pena2017,Gutman2018,Gutman2022} developed unified convergence guarantees for convex optimization ($\mu=0$) for a range of first-order methods from accelerated smooth methods to nonsmooth Bregman and conditional subgradient methods. Beyond just showing convergence of the objective gap, these works showed convergence of perturbed primal-dual quantities based on aggregating (sub)gradient information. This work shares a similar spirit but addresses the setting of $\mu>0$. Strong convexity ensures our non-perturbed dual gaps are finite, a necessity for our theory.

The recent work of Diakonikolas and Orecchia~\cite{Diakonikolas2019ContinuousDualGap} developed first-order methods by discretizing continuous-time dynamical systems with decreasing gaps between aggregated upper and lower bounds on optimality. This technique is able to recover dual averaging among many other standard first-order methods. Although they only provide primal guarantees, their approach may be extendable to bound dual gaps.

\noindent {\bf Prior Primal Weighted Averaging Analysis.} The value and importance of returning a carefully chosen weighted combination $\sum_{k=0}^{T-1}\sigma_k x_k$ of subgradient method iterates has been studied by several prior works. Rakhlin et al.~\cite[Theorem 5]{Rakhlin2012} showed uniformly averaging the last $q\in(0,1)$ fraction of iterates (called $q$-suffix averaging) can lead to an optimal $O(1/T)$ primal convergence rate for strongly convex minimization. Shamir and Zhang~\cite[Theorem 3 and 4]{Shamir2013} improved this theory and additionally showed polynomial weightings yield the same optimal rate with less computational overhead. The $s^k$-stepsize rule developed by Gustavsson et al.~\cite[Section 3.3]{Gustavsson2015} builds substantial theory for such polynomial stepsizes and weightings $\sigma_k=(k+1)^p$. The $\sigma_k = k+1$ choice of~\cite{Lacoste-Julien2012} amounts to the simplest polynomial weighting choice. Our theory provides a novel insight into the source of these iterate aggregation weights: primal-dual guarantees hold for any stepsizes $\alpha_k$ if the averaging used is proportional to the stepsize's corresponding dual weights $\sigma_k\propto \lambda_k$.

\noindent {\bf Alternative Lagrangian Dual Averaging Approaches.} Our proposed Lagrangian Proximal Dual Averaging Method~\eqref{eq:super-dual-method} implicitly sets the dual multipliers based on the frequency/total weight of steps taken on each constraint function, after which the iteration amounts to repeated model minimization. Alternatively, one could apply dual averaging to the Lagrangian minimax problem directly. Such an approach is proposed and analyzed by Metel and Takeda~\cite{Metel2021}.

\subsection{Assumptions for our Convergence Theory}
Our primal-dual convergence rate theory relies on three assumptions. Our first two assumptions are standard, strong convexity and the existence of a Slater point.

\noindent {\bf Assumption A.} The functions $f_s$ for $s=0\dots m$ are each $\mu>0$-strongly convex.\\
\noindent {\bf Assumption B.} There exists some $x_\mathtt{SL} \in \mathrm{dom\ }\partial r$ with $f_s(x_\mathtt{SL}) <0$ for all 
 $s = 1\dots m$.

Note strong convexity ensures there exists a unique minimizer $x_\mathtt{OPT} \in \mathrm{dom\ }\partial r$ of~\eqref{eq:super-problem}. These two points $x_\mathtt{OPT},x_\mathtt{SL} \in \mathrm{dom\ }\partial r$ serve as important references that our analysis is done with respect to. We fix two subgradients of $r$ at these points: The subgradient $n_{x_\mathtt{OPT}}\in \partial r(x_\mathtt{OPT})$ is chosen such that $f_0(x) + r(x_\mathtt{OPT}) + \langle n_{x_\mathtt{OPT}}, x-x_\mathtt{OPT}\rangle$ is minimized over $f_s(x)\leq 0$ at $x_\mathtt{OPT}$ for all $ s=1\ldots m$. The subgradient $n_{x_\mathtt{SL}} \in \partial r(x_\mathtt{SL})$ can be chosen freely. 

These two reference subgradients of $r$ facilitate considering two lower bounds of the objective $f_0+r$ for either $y\in\{x_{\mathtt{OPT}}, x_{\mathtt{SL}}\}$, denoted by
$$ h_{y}(x) := f_0(x) + r(y) + \langle n_{y}, x-y\rangle \ . $$
At each iteration $k$, we denote the relative difference between $x_k$ and $y\in\{x_{\mathtt{OPT}}, x_{\mathtt{SL}}\}$ in (relaxed) objective value or feasibility on the selected constraint function $f_{s(x_k)}$ by
\begin{align}
    \delta_k(y) &:=\begin{cases}
        h_y(x_k) - h_y(y) & \mathrm{if\ } x_k \mathrm{\ is\ feasible}\\
        f_{s(x_k)}(x_k) - f_{s(x_k)}(y) & \mathrm{otherwise.}
    \end{cases}\label{eq:delta_k-definition}
\end{align}
Note $\delta_k(y)$ is always finite since the real-valued objective function lower bound $h_y$ is used instead of $f_0+r$ which takes value in the extended reals. Indeed, consider $r$ as an indicator function for a simple constraint set. This set is projected onto at each iteration where $x_k$ is feasible for all of the functional constraints, ensuring $x_{k+1}$ is feasible for the simple constraint. We cannot, however, guarantee that $x_k$ satisfies the simple constraint, and so $r(x_k)$ may be infinite.

The sign of $\delta_k(y)$ may vary. If $y = x_{\mathtt{OPT}}$, then $\delta_k(x_\mathtt{OPT})$ is nonnegative, being lower bounded by the level of suboptimality or current infeasibility
$$ \delta_k(x_\mathtt{OPT}) \geq \begin{cases}
        h_{x_\mathtt{OPT}}(x_k) - p_\star & \mathrm{if\ } x_k \mathrm{\ is\ feasible}\\
        f_{s(x_k)}(x_k) & \mathrm{otherwise}
    \end{cases} \ \geq 0 \ . $$
When $y = x_{\mathtt{SL}}$ and $x_k$ is feasible, $\delta_k(y)$ may be negative but is bounded below by
$$ \delta_k(x_\mathtt{SL}) \geq \inf h_{x_{\mathtt{SL}}} -h_{x_{\mathtt{SL}}}(x_{\mathtt{SL}}) > -\infty \ . $$ When $y = x_{\mathtt{SL}}$ and $x_k$ is infeasible, $\delta_k(y)$ is strictly positive, being bounded below by $$\delta_k(x_\mathtt{SL}) \geq 0-\max_{s=1\dots m} f_s(x_\mathtt{SL}) >0 \ . $$
A common third assumption used in the analysis of subgradient methods is the uniform boundedness of subgradients. However, if this holds everywhere, the objective must be uniformly Lipschitz continuous, implying it asymptotically grows at most linearly. Contradicting this, strong convexity implies it grows at least quadratically. To avoid such incongruences and to include combinations of smooth and nonsmooth optimization, we consider a more general model than Lipschitz continuity similar to that previously considered in~\cite[Section 1.2]{Grimmer2019}, allowing for quadratic growth.

\noindent {\bf Assumption C.} For both $y\in\{x_{\mathtt{OPT}}, x_{\mathtt{SL}}\}$, there exist constants $L_0$, $L_1{\color{blue}\geq 0}$ such that every iterate $x_k$ has
\begin{equation}\label{eq:NonLipschitz-condition}
    \begin{cases}
        x_k \mathrm{\ is\ feasible} & \implies \mathbb{E}_{\xi_k}\|g_0(x_k;\xi_k) + n_{y}\|_2^2 \leq L_0^2 + L_1\delta_k(y)\\
        x_k \mathrm{\ is\ not\ feasible} & \implies \mathbb{E}_{\xi_k}\|g_{s(x_k)}(x_k;\xi_k)\|_2^2 \leq L_0^2 + L_1\delta_k(y) \ . 
    \end{cases}
\end{equation}
This assumption captures several common settings. The following lemma shows this condition holds for any additive combination of nonsmooth Lipschitz and smooth settings with bounded variance in the stochastic subgradient oracles. 
\begin{lemma}\label{lem:AssumptionC-verification}
    If there exists functions $f^{(1)}_s, f^{(2)}_s$ such that each $f_s = f_s^{(1)} + f^{(2)}_s$ for $s=0,\dots, m$ where $f^{(1)}_s$ is uniformly $M$-Lipschitz and $f^{(2)}_s$ has uniformly $L$-Lipschitz gradient and $g_{s(x_k)}(x_k;\xi_k)$ has variance uniformly bounded by $\sigma^2$, then Assumption C holds.
\end{lemma}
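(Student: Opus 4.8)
The plan is to reduce the stochastic bound to a deterministic one, split each mean subgradient into its $M$-Lipschitz and $L$-smooth pieces, and then control the size of the smooth piece by the value gap $\delta_k(y)$ via co-coercivity, closing the argument with strong convexity. Throughout let $\phi$ denote the active function: $\phi = f_0$ when $x_k$ is feasible (in which case I work with the affine-shifted lower bound $h_y$) and $\phi = f_{s(x_k)}$ otherwise, and write $\phi = \phi^{(1)} + \phi^{(2)}$ with $\phi^{(1)}$ being $M$-Lipschitz and $\phi^{(2)}$ being $L$-smooth.

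First I would remove the randomness. Writing $\bar g_{s}(x_k) := \mathbb{E}_{\xi_k} g_s(x_k;\xi_k) \in \partial f_s(x_k)$, the bias--variance split and the assumed variance bound give $\mathbb{E}_{\xi_k}\|g_{s(x_k)}(x_k;\xi_k) + v\|_2^2 = \|\bar g_{s(x_k)}(x_k) + v\|_2^2 + \mathbb{E}_{\xi_k}\|g_{s(x_k)}(x_k;\xi_k) - \bar g_{s(x_k)}(x_k)\|_2^2 \le \|\bar g_{s(x_k)}(x_k) + v\|_2^2 + \sigma^2$, where $v = n_y$ in the feasible case and $v=0$ otherwise. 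Thus it suffices to bound the deterministic squared norm and then absorb $\sigma^2$ into $L_0^2$. Next I would decompose $\bar g_{s(x_k)}(x_k) = p_k + \nabla\phi^{(2)}(x_k)$ with $p_k \in \partial\phi^{(1)}(x_k)$ and $\|p_k\|_2 \le M$, insert $\nabla\phi^{(2)}(x_k) = \nabla\phi^{(2)}(y) + (\nabla\phi^{(2)}(x_k) - \nabla\phi^{(2)}(y))$, and apply $\|a+b+c\|_2^2 \le 3(\|a\|_2^2 + \|b\|_2^2 + \|c\|_2^2)$. This leaves $3M^2$ plus a constant $3\|\nabla\phi^{(2)}(y)+v\|_2^2$ (finitely many $\phi$, two reference points $y$) plus the crucial term $3\|\nabla\phi^{(2)}(x_k) - \nabla\phi^{(2)}(y)\|_2^2$.

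The heart of the argument is bounding this gradient-difference term by $\delta_k(y)$. Co-coercivity of the $L$-smooth convex $\phi^{(2)}$ gives $\|\nabla\phi^{(2)}(x_k) - \nabla\phi^{(2)}(y)\|_2^2 \le 2L\, D_{\phi^{(2)}}(x_k,y)$ with $D_{\phi^{(2)}}(x_k,y) = \phi^{(2)}(x_k) - \phi^{(2)}(y) - \langle \nabla\phi^{(2)}(y), x_k - y\rangle$. Since the Bregman divergence of the convex part $\phi^{(1)}$ is nonnegative (and the affine shift contributes zero), $D_{\phi^{(2)}}(x_k,y) \le D_{\phi}(x_k,y) = \delta_k(y) - \langle q_y, x_k - y\rangle$ for a fixed subgradient $q_y \in \partial\phi(y)$ (in the feasible case $q_y = \bar g_0(y) + n_y \in \partial h_y(y)$, matching $\delta_k(y) = h_y(x_k) - h_y(y)$). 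The only obstruction is the linear correction $-\langle q_y, x_k - y\rangle$, which has no fixed sign and involves the possibly-distant iterate $x_k$.

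I would close the loop using strong convexity, which I expect to be the main obstacle and the reason Assumption A is essential here. After shifting $\delta_k(y)$ by the finite constant lower bound established for the feasible-$x_{\mathtt{SL}}$ case so that it may be taken nonnegative, $\mu$-strong convexity of $\phi$ yields $\delta_k(y) \ge \langle q_y, x_k-y\rangle + \tfrac{\mu}{2}\|x_k-y\|_2^2$, hence $\tfrac{\mu}{2}\|x_k-y\|_2^2 \le \delta_k(y) + \|q_y\|_2\|x_k-y\|_2$. Solving this scalar quadratic gives $\|x_k-y\|_2 \le \tfrac{2\|q_y\|_2}{\mu} + \sqrt{2\delta_k(y)/\mu}$, so $-\langle q_y, x_k-y\rangle \le \|q_y\|_2\|x_k-y\|_2$ is at most a constant plus $\|q_y\|_2\sqrt{2\delta_k(y)/\mu}$; Young's inequality turns the square-root term into a constant plus a small multiple of $\delta_k(y)$. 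Propagating this back up the chain produces the desired bound $L_0^2 + L_1\delta_k(y)$ with $L_1$ of order $L$ and $L_0$ depending only on $M$, $\sigma$, $\mu$, and the fixed reference quantities, and taking maxima over the finitely many constraint indices makes the constants uniform in $k$. I would remark that for $y = x_{\mathtt{OPT}}$ this last step can be shortcut: the defining optimality property of $n_{x_{\mathtt{OPT}}}$ forces $\langle q_y, x_k - y\rangle \ge 0$ on feasible $x_k$, so the correction is simply dropped and strong convexity is not needed for that anchor.
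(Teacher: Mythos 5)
Your proof is correct, but it takes a genuinely different route from the paper's at the crucial step. Both arguments open identically: the bias--variance split to absorb $\sigma^2$, the sum-rule decomposition of the mean subgradient into an $M$-bounded piece plus $\nabla\phi^{(2)}(x_k)$, and the $\|a+b+c\|_2^2\le 3\|a\|_2^2+3\|b\|_2^2+3\|c\|_2^2$ bound. The divergence is in how the smooth part is tied to $\delta_k(y)$. The paper constructs an $L$-smooth \emph{minorant} $\hat h_y(x) = f_0^{(2)}(x) + f_0^{(1)}(y) + \langle g_y - \nabla f_0^{(2)}(y), x-y\rangle + r(y) \le h_y(x)$ touching at $y$, and applies the descent lemma anchored at its infimum: $\|\nabla \hat h_y(x_k)\|_2^2 \le 2L(\hat h_y(x_k) - \inf \hat h_y) \le 2L\delta_k(y) + 2L(h_y(y)-\inf\hat h_y)$. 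Because the anchor is $\inf\hat h_y$ rather than the value at $y$, the linear correction $-\langle q_y, x_k-y\rangle$ never appears, so the paper needs neither co-coercivity (hence no convexity of $f^{(2)}$, only smoothness) nor any control of $\|x_k-y\|_2$, and it reads off $L_1=6L$ directly. You instead invoke co-coercivity of the convex smooth part, reduce to the Bregman divergence $D_\phi(x_k,y)=\delta_k(y)-\langle q_y,x_k-y\rangle$, and then spend the main effort neutralizing the linear term: the strong-convexity distance bound $\|x_k-y\|_2 \le 2\|q_y\|_2/\mu + \sqrt{2(\delta_k(y)+B)/\mu}$ (after the finite shift $B$ needed only in the feasible-$x_\mathtt{SL}$ case) followed by Young's inequality. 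This is sound and yields a bound of the required form with constants uniform over the finitely many functions and the two anchors; your $x_\mathtt{OPT}$ shortcut is also valid, provided $q_y$ is taken to be the subgradient certifying constrained optimality of $x_\mathtt{OPT}$ (which exists by the Slater condition of Assumption B). The tradeoffs: your route additionally requires convexity of $\phi^{(2)}$ (for co-coercivity) and uses Assumption A essentially, whereas the paper's core inequality uses neither (strong convexity enters the paper's proof only implicitly, to ensure $\inf\hat h_y > -\infty$ so its constant $L_0$ is finite); your constant $L_1 = 6L(1+c/2)$ from the Young parameter $c$ is also slightly worse than the paper's $6L$, though this is immaterial to the lemma as stated.
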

The proof of this lemma is deferred to the appendix where the explicit constants $L_0$ and $L_1$ can be found. {\color{blue} This established that in the standard nonsmooth optimization setting where each $f_s$ is uniformly Lipschitz, Assumption C holds with $L_1=0$. Further, Assumption C also holds in the standard smooth optimization setting where each $f_s$ has uniformly $L$-Lipschitz gradient.} 
Section~\ref{sec:numerics} gives an illustrative numerical example of this form where Assumption C holds despite the objective being neither Lipschitz nor smooth.

Note allowing non-Lipschitz objectives allows a range of undesirable ``bad'' behaviors to occur. It allows the early iterations of the subgradient method to diverge exponentially. For example, consider minimizing $f(u,v) = 50u^2 + 0.5v^2$, which has $\mu=1,L_0=0,L_1=200$, with the subgradient method~\eqref{eq:basic-subgradient-method} initialized with $x_0=(1,0)$ and $\alpha_k=2/\mu(k+2)$ (corresponding to $\lambda_k=k+1,\beta_k=0$). For the first one hundred iterations, the size of $x_k$ grows exponentially, peaking with $\|x_{100}\|_2$ just over $10^{56}$, after which it converges monotonically to $f$'s minimizer. Despite such instances existing, our theory shows that even if $x_k$ diverges in its early iterations, it will always subsequently converge at least at rate $O(1/T)$. To the best of our knowledge, no existing analysis of subgradient methods or dual averaging addresses this phenomenon.

To understand and bound such behaviors, we introduce the following two constants, dependent on the choice of stepsizes $\alpha_k$ and associated dual weights $\lambda_k$,
\begin{align}
    T_0 &:= \sup\left\{ k\in\{0,1,2\dots\} \mid L_1\alpha_k > 1\right\} \ ,\label{eq:T0} \\
    C_0 &:= \sum_{k=0}^{T_0}\lambda_k \max\left\{L_1 \alpha_k - 1, 0 \right\}\max\{\mathbb{E}_\xi\delta_k(x_\mathtt{OPT}), \mathbb{E}_\xi\delta_k(x_\mathtt{SL})\} \ . \label{eq:C0}
\end{align}
Observe that $T_0$, and hence $C_0$, is bounded if $\alpha_k$ is eventually always less {\color{blue} or equal to} $1/L_1$, capturing all stepsize policies with $\alpha_k\rightarrow 0$. Despite being bounded, the constant $C_0$ can be exponentially large in $T_0$. The toy example considered above has $T_0=397$ and $C_0 > 10^{112}$. Such potentially exponential-sized constants can be avoided through careful stepsize selection as if one selects $\alpha_k \leq 1/L_1$ for all $k$ as then $T_0=-\infty$ and $C_0 = 0$. In particular, under the classic assumption that subgradients seen are uniformly bounded, Assumption C holds with $L_1=0$ and so $C_0=0$. Regardless, our convergence guarantees apply whenever $C_0$ is finite. As a natural consequence of our main convergence analysis, we find the rate $x_k$ can diverge, and hence the constant $C_0$, are at most exponential in $T_0$ (see Proposition~\ref{prop:exp-bound}).

\section{Primal-Dual Equivalence and Convergence Analysis} \label{sec:results}
In this section, we show the considered primal switching proximal method~\eqref{eq:super-primal-method} and dual Lagrangian proximal method~\eqref{eq:super-dual-method} are equivalent and subsequently state and prove new primal-dual convergence guarantees for these methods.

\begin{theorem} \label{thm:super-equivalence}
    Let $\{x_k\}$ be the sequence of iterates of the primal method~\eqref{eq:super-primal-method} with stepsizes $\alpha_k>0$ and $\{y_k\}$ be the sequence of iterates of the dual method~\eqref{eq:super-dual-method} for some $\lambda_k,\beta_k,\mu\geq 0$. If $x_0=y_0$, $\beta_k=\bar\beta\geq 0$ is constant, and
    \begin{equation}
        \alpha_k = \frac{\lambda_k}{\mu\sum_{i=0}^k\lambda_i + \bar\beta}\ , \label{eq:alpha-lambda}
    \end{equation} 
    then these methods are equivalent, that is $x_k=y_k$.
\end{theorem}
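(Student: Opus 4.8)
The plan is to prove $x_k = y_k$ for all $k$ by induction, the base case $x_0 = y_0$ holding by hypothesis. The entire argument rests on collapsing the aggregate dual model into a single quadratic centered at the current iterate. Assume $x_i = y_i$ for all $i \le k$. By Lemma~\ref{lem:Mk-minimization} (applied at index $k-1$, using $\beta_{k-1}=\bar\beta$), the point $y_k$ is the unique minimizer of $M^{(k-1)}(y) + \frac{\bar\beta}{2}\|y-y_0\|_2^2$. Each of the $k$ past steps contributes exactly one term $\frac{\mu}{2}\lambda_i\|y-y_i\|_2^2$ to $M^{(k-1)}$ (the aggregate $R^{(k-1)}$ being affine in $y$), so this function is a quadratic with Hessian $(\mu\sum_{i=0}^{k-1}\lambda_i + \bar\beta)I$. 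Lemma~\ref{lem:standard-quadratic} then rewrites it, up to an additive constant, as $\frac{B_{k-1}}{2}\|y-y_k\|_2^2$ with $B_{k-1} := \mu\sum_{i=0}^{k-1}\lambda_i + \bar\beta$. This is the crucial simplification: the whole history is encoded by the single center $y_k$ and scalar $B_{k-1}$.

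Next I substitute into the update. Since $\beta_k = \bar\beta$, the new iterate $y_{k+1}$ minimizes $\frac{B_{k-1}}{2}\|y-y_k\|_2^2 + U^{(k)}(y)$ up to constants, and I split on the feasibility of $y_k = x_k$. In the infeasible case ($s(y_k)\neq 0$), the term $U^{(k)}$ adds only $\lambda_k(\langle g_{s(y_k)}(y_k;\xi_k), y-y_k\rangle + \frac{\mu}{2}\|y-y_k\|_2^2)$ plus a constant, so the objective is a strongly convex quadratic with coefficient $B_{k-1}+\mu\lambda_k = \mu\sum_{i=0}^{k}\lambda_i + \bar\beta$, and its closed-form minimizer is $y_{k+1} = y_k - \frac{\lambda_k}{\mu\sum_{i=0}^{k}\lambda_i + \bar\beta}\,g_{s(y_k)}(y_k;\xi_k) = y_k - \alpha_k g_{s(y_k)}(y_k;\xi_k)$ by the stepsize relation~\eqref{eq:alpha-lambda}. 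This is exactly the primal update, giving $y_{k+1}=x_{k+1}$.

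In the feasible case ($s(y_k)=0$), the term $U^{(k)}$ additionally carries $\lambda_k r(y)$. Completing the square on the smooth part collapses $\frac{B_k}{2}\|y-y_k\|_2^2 + \lambda_k\langle g_0(y_k;\xi_k), y-y_k\rangle$ into $\frac{B_k}{2}\|y - (y_k - \alpha_k g_0(y_k;\xi_k))\|_2^2$ up to a constant, where $B_k := \mu\sum_{i=0}^k\lambda_i + \bar\beta$ satisfies $\lambda_k/B_k = \alpha_k$. Dividing the objective by $\lambda_k$ and using $B_k/\lambda_k = 1/\alpha_k$, I find $y_{k+1}$ minimizes $r(y) + \frac{1}{2\alpha_k}\|y - (y_k - \alpha_k g_0(y_k;\xi_k))\|_2^2$, which is precisely $\mathrm{prox}_{\alpha_k,r}(y_k - \alpha_k g_0(y_k;\xi_k))$ by~\eqref{eq:proximal-equivalence}. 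This again matches the primal update, so $y_{k+1}=x_{k+1}$, closing the induction.

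The main obstacle is purely the bookkeeping that forces the effective stepsize to come out to exactly $\lambda_k/(\mu\sum_{i=0}^k\lambda_i + \bar\beta) = \alpha_k$: this requires the history to collapse to a quadratic centered \emph{precisely} at $y_k$ (which fails if $\beta_k$ varies, explaining the constant-$\bar\beta$ hypothesis) together with the coefficient accumulating as $B_{k-1}+\mu\lambda_k = B_k$. One should also check the representation is legitimate, i.e.\ that $B_{k-1}>0$; since $\lambda_i>0$ and $\mu+\bar\beta>0$ is forced by $\alpha_k>0$ being well defined, this holds for $k\ge 1$, while the $k=0$ step starts from $M^{(-1)}=0$ (center $y_0$, coefficient $\bar\beta$) and is verified directly before adding the strictly positive $\mu\lambda_0$ from $U^{(0)}$.
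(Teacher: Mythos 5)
Your proof is correct, and it takes a genuinely different route from the paper's. The paper also argues by induction and also leans on Lemma~\ref{lem:Mk-minimization}, but it strengthens the induction hypothesis to additionally track that the dual method's subgradients $n_{i+1}\in\partial r(y_{i+1})$ coincide with those implicitly produced by the primal proximal steps via~\eqref{eq:proximal-equivalence}; it then unrolls the first-order optimality condition $\sum_{i=0}^k \lambda_i\left(\bar g_i + \mu(y_{k+1}-y_i)\right) + \bar\beta(y_{k+1}-y_0) = 0$ into an explicit double sum over the entire gradient history and telescopes it (swapping orders of summation) to arrive at $y_{k+1} = x_k - \alpha_k \bar g_k$. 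You avoid all of that bookkeeping by invoking Lemma~\ref{lem:Mk-minimization} at index $k-1$ together with Lemma~\ref{lem:standard-quadratic} to collapse $M^{(k-1)}(y) + \tfrac{\bar\beta}{2}\|y-y_0\|_2^2$ into $\mathrm{const} + \tfrac{B_{k-1}}{2}\|y-y_k\|_2^2$, after which the update is a one-step completion of squares; interestingly, this is exactly the device the paper itself uses later, in the proof of Lemma~\ref{lem:switching-proximal-dual-induction}, to control the dual gap, so your argument unifies the two. What your route buys: a weaker induction hypothesis (only $x_k = y_k$ is really used), no sum-swapping algebra, a transparent explanation of why constant $\bar\beta$ is required (a varying $\beta_k$ would destroy the centering at $y_k$), and careful handling of the degenerate cases ($B_{k-1}>0$ for $k\ge 1$, and the $k=0$ step with $M^{(-1)}=0$). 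What the paper's route buys: the explicit identity $n_{k+1} = \tfrac{1}{\alpha_k}(x_k - \alpha_k g_0(x_k;\xi_k) - x_{k+1})$, which makes the correspondence between the dual method's model slopes and the primal proximal steps concrete and is convenient to have on record. One trivial nitpick: in the feasible case, the final identification of the minimizer of $r(y) + \tfrac{1}{2\alpha_k}\|y - (y_k - \alpha_k g_0(y_k;\xi_k))\|_2^2$ is by the definition of $\mathrm{prox}_{\alpha_k,r}$ itself rather than by the optimality characterization~\eqref{eq:proximal-equivalence}, but this does not affect correctness.
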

\begin{proof}
        We prove this by inductively showing that $x_k=y_k$ and for any iteration with $s(x_k)=0$ that $n_{k+1} = \frac{1}{\alpha_k}(x_k-\alpha_k g_{0}(x_k;\xi_k) -x_{k+1})$ (i.e., the subgradients of $r$  produced by the dual method are exactly those produced by the primal method via~\eqref{eq:proximal-equivalence}).
    By assumption, $x_0=y_0$. Suppose for induction that $x_i=y_i$ for all $i=0,\dots,k$ and for all $i=0,\dots,k-1$ with $s(x_i)=0$ that $n_{i+1} = \frac{1}{\alpha_i}(x_i-\alpha_i g_{0}(x_i;\xi_i) -x_{i+1})$. Let $\bar g_i = g_0(y_i;\xi_i) + n_{i+1}$ if $y_i$ is feasible and $g_{s(y_i)}(y_i;\xi_i)$ otherwise. For each $i<k$, we claim $x_{i+1} =  x_i - \alpha_i \bar g_i$. If $x_i$ is infeasible, this is immediate. If $x_i$ is feasible, {\color{blue} rearrangement of our inductive hypothesis of on $n_{i+1}$ gives the claim}. Inductively, we conclude for $i<k$ that $ x_{i+1} - x_0 = - \sum_{t=0}^i \alpha_t \bar g_t. $
    
    By Lemma~\ref{lem:Mk-minimization}, $y_{k+1}$ is the unique solution to
    $\sum_{i=0}^k \lambda_i\left(\bar g_i + \mu(y_{k+1}-y_i) \right) + \bar\beta(y_{k+1}-y_0) = 0. $
    Rearranging and simplifying this, it follows that
    \begin{align*}
    y_{k+1} &= y_0 + \frac{\sum_{i=0}^k\lambda_i\mu(y_i-y_0)-\sum_{i=0}^k\lambda_i\bar g_i}{\sum_{i=0}^k\lambda_i\mu+\bar\beta}\\
    &= y_0 -\frac{\sum_{i=0}^{k} \lambda_{i}\mu\left( \sum_{t=0}^{i-1} \frac{\lambda_{t} \bar g_t}{\sum_{s=0}^{t}\lambda_{s}\mu + \bar \beta} \right) + \sum_{i=0}^{k}\lambda_i \bar g_i}{\sum_{i=0}^{k}\lambda_i\mu + \bar\beta}\\
    &= y_0 -\frac{\sum_{t=0}^{k-1} \lambda_t \bar g_t\frac{\sum_{i=t+1}^k \mu\lambda_i}{\sum_{i=0}^t \mu\lambda_{{\color{blue} i}} +\bar\beta} + \sum_{i=0}^{k}\lambda_i \bar g_i}{\sum_{i=0}^{k}\lambda_i\mu+\bar\beta}\\
    &= y_0 -\frac{\sum_{t=0}^{k-1} \lambda_t \bar g_t\left(\frac{\sum_{i=t+1}^k \mu\lambda_i}{\sum_{i=0}^t \mu\lambda_{{\color{blue} i}} +\bar\beta}+1\right)}{\sum_{i=0}^{k}\lambda_i\mu+\bar\beta} - \frac{\lambda_k \bar g_k}{\sum_{i=0}^{k}\lambda_i\mu + \bar\beta}\\
    &=  y_0 -\sum_{t=0}^{k-1} \frac{\lambda_t \bar g_t}{\sum_{i=0}^{t}\lambda_{{\color{blue} i}}\mu+\bar\beta} - \frac{\lambda_k \bar g_k}{\sum_{i=0}^{k}\lambda_i\mu + \bar\beta}\\
    &= x_k - \alpha_k\bar g_k
    \end{align*}
    where the second equality uses the inductive hypothesis for each $y_i-y_0 = x_i-x_0 = - \sum_{t=0}^{i-1} \alpha_t\bar g_t$, the third exchanges summands, and the remainder combines and simplifies terms. If $x_k$ is infeasible, {\color{blue} it is} immediate that $x_{k+1}=x_k-\alpha_k\bar g_k=y_{k+1}$. If $x_k$ is feasible, the above equality ensures
    $$ \frac{1}{\alpha_k}(x_k - \alpha_k g_0(x_k;\xi_k) -y_{k+1}) = n_{k+1} \in\partial r(y_{k+1}) \ . $$
    Noting by~\eqref{eq:proximal-equivalence}, $x_{k+1}$ is the unique solution to $\frac{1}{\alpha_k}(x_k-\alpha_kg_0(x_k;\xi_k) -z)\in\partial r(z)$, we must have $x_{k+1} {\color{blue} = x_k - \alpha_k(g_k + n_{k+1})} = y_{k+1}$ and $n_{k+1} =  \frac{1}{\alpha_k}(x_k - \alpha_k g_0(x_k;\xi_k) -x_{k+1})$ as required.
\end{proof}
\begin{remark}\label{rmk:weights}
Theorem~\ref{thm:super-equivalence} suffices to give a dual description for any sequence of primal stepsizes with $\alpha_0\in (0,1/\mu]$ and $\alpha_k\in (0,1/\mu)$ thereafter: one can select any $\lambda_0$ and $\bar\beta$ satisfying $\alpha_0 = \lambda_0/(\mu\lambda_0+\bar\beta)$ and then the corresponding sequence of dual weights is given by the recurrence
\begin{equation}\label{eq:weights}
        \lambda_{k+1} = \dfrac{\alpha_{k+1}}{1-\mu\alpha_{k+1}}\frac{\lambda_k}{\alpha_k} \qquad \qquad \left(\implies \lambda_{T} = \dfrac{\alpha_{T}}{\Pi_{k=1}^T(1-\mu\alpha_{k})}\frac{\lambda_0}{\alpha_0}\right)\ .
\end{equation}
One can inductively verify this sequence has $\alpha_k = \frac{\lambda_k}{\mu \sum_{i=0}^k \lambda_i + \bar\beta}$ as
$$ \frac{\lambda_{k+1}}{\mu \sum_{i=0}^{k+1} \lambda_i + \bar\beta} = \frac{\lambda_{k+1}}{\mu \sum_{i=0}^{k} \lambda_i + \bar\beta + \mu\lambda_{k+1}} = \frac{\lambda_{k+1}}{\frac{\lambda_k}{\alpha_k} + \mu\lambda_{k+1}} = \frac{\frac{\alpha_{k+1}}{1-\mu\alpha_{k+1}}}{1 + \frac{\mu\alpha_{k+1}}{1-\mu\alpha_{k+1}}} = \alpha_{k+1} \ . $$
Hence, provided dual weights $\lambda_k$, one can easily construct $\alpha_k$ as stated in the theorem, and conversely, given stepsizes $\alpha_k$, one can easily construct corresponding weights $\lambda_k$. For example, fixing $\lambda_0=1$, $\bar\beta=0$ multipliers for several common stepsizes are
    \begin{align*}
        \alpha_k &= \frac{1}{\mu(k+1)} \implies \lambda_k = 1 \ ,\\
        \alpha_k &= \frac{2}{\mu(k+2)} \implies \lambda_k = k+1 \ ,\\
        \alpha_k &= \frac{1}{\mu\sqrt{k+1}} \implies \lambda_k = \frac{1/\sqrt{k+1}}{\Pi_{i=1}^k (1-1/\sqrt{i+1})} \approx \exp(\sqrt{k})/\sqrt{k} \ .
    \end{align*}
\end{remark}
\begin{remark}
    Nesterov~\cite{Nesterov2005PrimaldualSM} noted that in non-strongly convex settings ($\mu=0$), decreasing stepsizes $\alpha_k$ corresponds to placing decreasing weight on new subgradient lower bounds $\lambda_k = \alpha_k \bar\beta$. This runs counter to the intuition that the newest models ought to be most relevant. Rather surprisingly, our Theorem~\ref{thm:super-equivalence} shows this fault does not extend to the strongly convex settings ($\mu>0$). As seen above, the decreasing stepsize selection of $\alpha_k = 2/\mu(k+2)$ corresponds to increasing dual weights $\lambda_k=k+1$.
\end{remark}

\subsection{Statement of Primal-Dual Convergence Guarantees}
For ease of presenting our convergence theory, we fix $\bar\beta=0$. This parameter's primary purpose in Nesterov's development of dual averaging~\cite{Nesterov2005PrimaldualSM} was to make the model subproblem strongly convex. Strongly convex problems $\mu>0$, as considered here, have no such need. Following Remark~\ref{rmk:weights}, fixing $\bar\beta=0$ only restricts the first stepsize as any sequence $\alpha_0=1/\mu$ and $\alpha_k\in (0,1/\mu)$ can still be dually described.

We prove a uniform convergence guarantee in terms of the primal gap
\begin{equation} \label{eq:primalgap-definition}
    \mathtt{primal\mbox{-}gap}_T := \frac{\sum_{k< T: s(x_k)=0} \lambda_k h_{x_\mathtt{OPT}}(x_k)}{\sum_{k< T: s(x_k)=0} \lambda_k } - p_\star \ ,
\end{equation}
which utilizes a combination of the feasible objective values seen, the dual gap
\begin{equation} \label{eq:dualgap-definition}
    \mathtt{dual\mbox{-}gap}_T := p_\star - \inf \frac{M^{(T-1)}}{\sum_{k< T: s(x_k)=0} \lambda_k }\ ,
\end{equation}
which utilizes a combination of the subgradient lower bounds seen, and the distance to optimal.
For the primal and dual gaps to be well-defined, at least one feasible iterate must have been seen (i.e., $\sum_{k< T: s(x_k)=0} \lambda_k >0$). Our assumptions facilitate a bound on how long it takes for this to occur. We show the expected fraction of the dual weight that occurs on iterations with a feasible iterate (i.e., $s(x_k)=0$) is bounded below.

\begin{proposition}\label{prop:slater-ratio}
    Under Assumptions A-C, for any primal stepsizes $\alpha_k>0$ and dual weights $\lambda_k>0$ satisfying~\eqref{eq:alpha-lambda} with $\bar\beta=0$, the stochastic switching proximal subgradient method~\eqref{eq:super-primal-method} has
    $$ \mathbb{E}_\xi\left[\frac{\sum_{k< T: s(x_k)=0} \lambda_k}{\sum_{k=0}^{T-1} \lambda_k}\right] \geq \frac{ \tau_\mathtt{SL} }{2(h_{x_\mathtt{SL}}(x_\mathtt{SL}) - \inf h_{x_\mathtt{SL}}) + \tau_\mathtt{SL}} \left(1 - \frac{L_0^2\sum_{k=0}^{T-1} \lambda_k\alpha_k + C_0}{{\color{blue} \tau_\mathtt{SL}}\sum_{k=0}^{T-1} \lambda_k}\right) $$
    where $\tau_\mathtt{SL} = 0-\max_{s=1\dots m} f_s(x_\mathtt{SL}) >0$.
\end{proposition}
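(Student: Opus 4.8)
The plan is to track a weighted version of the squared distance to the Slater point $x_\mathtt{SL}$, telescope it, and recover the feasible-weight fraction by splitting the resulting inequality over feasible versus infeasible iterations. Writing both branches of~\eqref{eq:super-primal-method} uniformly as $x_{k+1}=x_k-\alpha_k\bar g_k$ (with $\bar g_k = g_0(x_k;\xi_k)+n_{k+1}$ on feasible steps and $\bar g_k = g_{s(x_k)}(x_k;\xi_k)$ otherwise), I would expand $\|x_{k+1}-x_\mathtt{SL}\|_2^2$ and take the conditional expectation over $\xi_k$ to obtain a one-step inequality
\[ \mathbb{E}_{\xi_k}\|x_{k+1}-x_\mathtt{SL}\|_2^2 \le (1-\mu\alpha_k)\|x_k-x_\mathtt{SL}\|_2^2 - 2\alpha_k\mathbb{E}_{\xi_k}\delta_k(x_\mathtt{SL}) + \alpha_k^2\mathbb{E}_{\xi_k}\|\bar g_k\|_2^2 . \]
On infeasible steps this is immediate from the $\mu$-strong convexity of $f_{s(x_k)}$ (Assumption~A), which gives $\langle\mathbb{E}_{\xi_k}\bar g_k, x_k-x_\mathtt{SL}\rangle \ge \delta_k(x_\mathtt{SL})+\tfrac\mu2\|x_k-x_\mathtt{SL}\|_2^2$; on feasible steps the same estimate must be produced from the proximal optimality condition~\eqref{eq:proximal-equivalence} together with strong convexity of $f_0$ and the subgradient inequality for $r$.

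Second, I would convert this into a telescoping sum using the stepsize/weight relation~\eqref{eq:alpha-lambda} with $\bar\beta=0$. Writing $S_k=\sum_{i=0}^k\lambda_i$, the identity $\alpha_k=\lambda_k/(\mu S_k)$ gives $1-\mu\alpha_k=S_{k-1}/S_k$, so multiplying the one-step inequality by $\mu S_k$ makes the distance terms telescope as $\mu S_k\|x_{k+1}-x_\mathtt{SL}\|_2^2$ against $\mu S_{k-1}\|x_k-x_\mathtt{SL}\|_2^2$, and since $\alpha_0=1/\mu$ the initial term ($S_{-1}=0$) vanishes. Summing $k=0,\dots,T-1$ and dropping the nonnegative left-hand distance yields $2\sum_{k<T}\lambda_k\mathbb{E}\delta_k(x_\mathtt{SL}) \le \sum_{k<T}\lambda_k\alpha_k\mathbb{E}\|\bar g_k\|_2^2$. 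I would then apply Assumption~C~\eqref{eq:NonLipschitz-condition} to bound $\mathbb{E}\|\bar g_k\|_2^2 \le L_0^2+L_1\mathbb{E}\delta_k(x_\mathtt{SL})$ and move the $L_1\lambda_k\alpha_k\mathbb{E}\delta_k$ terms to the left, producing the weighted inequality $\sum_{k<T}\lambda_k(2-L_1\alpha_k)\mathbb{E}\delta_k(x_\mathtt{SL}) \le L_0^2\sum_{k<T}\lambda_k\alpha_k$.

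Third, I would lower-bound the left-hand side by splitting indices both by feasibility and by the threshold $T_0$ from~\eqref{eq:T0}. For $k>T_0$ we have $L_1\alpha_k\le 1$, so the coefficient $2-L_1\alpha_k$ lies in $[1,2]$; combined with $\delta_k(x_\mathtt{SL})\ge\tau_\mathtt{SL}$ on infeasible steps and $\delta_k(x_\mathtt{SL})\ge -(h_{x_\mathtt{SL}}(x_\mathtt{SL})-\inf h_{x_\mathtt{SL}})$ on feasible steps (both established before the statement), these contribute at least $\tau_\mathtt{SL}\lambda_k$ and at least $-2(h_{x_\mathtt{SL}}(x_\mathtt{SL})-\inf h_{x_\mathtt{SL}})\lambda_k$ respectively. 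The finitely many early indices $k\le T_0$, where $L_1\alpha_k>1$, are exactly those collected into $C_0$ via~\eqref{eq:C0}, using $\delta_k(x_\mathtt{SL})\le\max\{\mathbb{E}\delta_k(x_\mathtt{OPT}),\mathbb{E}\delta_k(x_\mathtt{SL})\}$ to bound their error by $C_0$. This gives $\tau_\mathtt{SL}\,\mathbb{E}[\sum_{k<T:s(x_k)\neq 0}\lambda_k] \le L_0^2\sum_{k<T}\lambda_k\alpha_k + C_0 + 2(h_{x_\mathtt{SL}}(x_\mathtt{SL})-\inf h_{x_\mathtt{SL}})\,\mathbb{E}[\sum_{k<T:s(x_k)=0}\lambda_k]$. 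Substituting $\sum_{k<T:s(x_k)\neq 0}\lambda_k = \sum_{k=0}^{T-1}\lambda_k-\sum_{k<T:s(x_k)=0}\lambda_k$, isolating $\mathbb{E}[\sum_{k<T:s(x_k)=0}\lambda_k]$, and dividing by $\sum_{k=0}^{T-1}\lambda_k$ reproduces the claimed bound exactly.

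The hard part will be the feasible-step one-step inequality: the proximal update uses the subgradient $n_{k+1}\in\partial r(x_{k+1})$ evaluated at the \emph{next} iterate, whereas both $\delta_k(x_\mathtt{SL})$ and Assumption~C are written with the fixed reference subgradient $n_{x_\mathtt{SL}}\in\partial r(x_\mathtt{SL})$, so reconciling these (through the prox inequality~\eqref{eq:proximal-equivalence} and monotonicity of $\partial r$) to land a clean $-2\alpha_k\delta_k(x_\mathtt{SL})$ term alongside an Assumption-C-compatible $\|\bar g_k\|_2^2$ is the delicate step. The remaining bookkeeping cost is making the $T_0/C_0$ split rigorous while tracking conditional versus total expectations, so that the $\max\{\mathbb{E}\delta_k(x_\mathtt{OPT}),\mathbb{E}\delta_k(x_\mathtt{SL})\}$ appearing in $C_0$ genuinely dominates the early-iteration error.
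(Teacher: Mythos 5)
Your proposal is correct and follows essentially the same route as the paper: your telescoped quantity $\tfrac{\mu}{2}\big(\sum_{i=0}^{k-1}\lambda_i\big)\mathbb{E}_\xi\|x_k-x_\mathtt{SL}\|_2^2$ is exactly the paper's $R_k(x_\mathtt{SL})$, your one-step and weighted inequalities are Lemma~\ref{lem:switching-proximal-primal-induction} and its telescoped form, and your feasibility/early-iteration split and final rearrangement match the paper's proof of Proposition~\ref{prop:slater-ratio} step for step. The feasible-step inequality you flag as the hard part is handled in the paper in one line: since $n_{x_\mathtt{SL}}\in\partial r(x_\mathtt{SL})$, the characterization~\eqref{eq:proximal-equivalence} gives $x_\mathtt{SL}=\mathrm{prox}_{\alpha_k,r}(x_\mathtt{SL}+\alpha_k n_{x_\mathtt{SL}})$, and nonexpansiveness of the proximal operator then yields $\mathbb{E}_{\xi_k}\|x_{k+1}-x_\mathtt{SL}\|_2^2\le\mathbb{E}_{\xi_k}\|x_k-\alpha_k(g_k+n_{x_\mathtt{SL}})-x_\mathtt{SL}\|_2^2$, so only the Assumption-C-compatible vector $g_k+n_{x_\mathtt{SL}}$ ever appears and $n_{k+1}$ is never needed --- this is precisely the monotonicity-of-$\partial r$ argument you gesture at, packaged as nonexpansiveness.
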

A proof of this is given in Subsection~\ref{subsec:proof-slater-ratio}. In deterministic settings, a feasible iterate must then have been reached once this bound is positive. From this, we see that any stepsize selection with $\sum \lambda_k\alpha_k/\sum \lambda_k \rightarrow 0$ will asymptotically have at least $\frac{ \tau_\mathtt{SL} }{2(h_{x_\mathtt{SL}}(x_\mathtt{SL}) - \inf h_{x_\mathtt{SL}}) + \tau_\mathtt{SL}}>0$ fraction of the dual weight on iterations with $x_k$ feasible.
Once a feasible iterate occurs, our primal and dual convergence measures are well-defined. Alternatively, one could assume the initialization $x_0$ is feasible to ensure these quantities are well-defined. In either case, Subsection~\ref{subsec:proof-main-result} proves the following primal-dual convergence guarantee as our main result.
\begin{theorem}\label{thm:super-rate}
    Under Assumptions A-C, for any primal stepsizes $\alpha_k>0$ and dual weights $\lambda_k>0$ satisfying~\eqref{eq:alpha-lambda} with $\bar\beta=0$, the stochastic switching proximal subgradient method~\eqref{eq:super-primal-method} has
    \begin{align*}
        &\mathbb{E}_\xi \left[\left(\frac{\sum_{k< T: s(x_k)=0} \lambda_k}{\sum_{k=0}^{T-1} \lambda_k}\right)\left(\mathtt{primal\mbox{-}gap}_T + \mathtt{dual\mbox{-}gap}_T\right) + \frac{\mu}{2}\|x_{T}-x_\mathtt{OPT}\|_2^2\right] \\
        &\qquad \qquad \leq \frac{ L_0^2\sum_{k=0}^{T-1}\lambda_k\alpha_k + C_0}{\sum_{k=0}^{T-1}\lambda_k} \ .
    \end{align*}
\end{theorem}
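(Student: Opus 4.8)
The plan is to work on the dual side throughout, invoking \Cref{thm:super-equivalence} to identify $x_k=y_k$ so that the aggregate model $M^{(k)}$, its minimizers, and the implicit multipliers $n_{k+1}$ are all available for the primal iterates. The engine is the quadratic structure of $M^{(k)}$: each iteration contributes a term $\tfrac{\mu}{2}\lambda_i\|\cdot-x_i\|_2^2$ while $R^{(k)}$ is affine, so writing $\Lambda_k:=\sum_{i<k}\lambda_i$ and combining \Cref{lem:standard-quadratic,lem:quad-calc} with the fact that $x_k=\argmin M^{(k-1)}$ (\Cref{lem:Mk-minimization} with $\bar\beta=0$) yields the exact representation $M^{(k-1)}(y)=\inf M^{(k-1)}+\tfrac{\mu\Lambda_k}{2}\|y-x_k\|_2^2$. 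Evaluating this at $x_\mathtt{OPT}$ converts each model increment into an exact recursion for $\inf M^{(k)}$ with telescoping distance terms $\tfrac{\mu\Lambda_k}{2}\|x_k-x_\mathtt{OPT}\|_2^2-\tfrac{\mu\Lambda_{k+1}}{2}\|x_{k+1}-x_\mathtt{OPT}\|_2^2$ plus the raw increment $\Delta_k(x_\mathtt{OPT})$. The whole argument reduces (after clearing the denominator $\Lambda_T$) to bounding $\bigl(\sum_{k<T:\,s(x_k)=0}\lambda_k h_{x_\mathtt{OPT}}(x_k)\bigr)-\inf M^{(T-1)}+\tfrac{\mu\Lambda_T}{2}\|x_T-x_\mathtt{OPT}\|_2^2$ in expectation.

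I would then bound this quantity by splitting it into a primal piece and a dual piece, each handled by its own telescoping and each paying \emph{half} of the stability cost. For the primal piece, the three-point (law-of-cosines) identity applied to the effective step $x_{k+1}=x_k-\alpha_k\bar g_k$ (with $\bar g_k=g_0(x_k;\xi_k)+n_{k+1}$ on feasible steps, using \eqref{eq:proximal-equivalence} and convexity of $r$), together with the strong-convexity lower bound \eqref{eq:subgradient-lower-bound}, gives a per-step inequality of the form $\lambda_k\delta_k(x_\mathtt{OPT})\le \tfrac{\mu\Lambda_k}{2}\|x_k-x_\mathtt{OPT}\|_2^2-\tfrac{\mu\Lambda_{k+1}}{2}\|x_{k+1}-x_\mathtt{OPT}\|_2^2+\tfrac{\lambda_k\alpha_k}{2}\|\bar g_k\|_2^2$; the crucial point is that the stepsize rule \eqref{eq:alpha-lambda} makes $\tfrac{\lambda_k}{2\alpha_k}=\tfrac{\mu\Lambda_{k+1}}{2}$, so these coefficients telescope cleanly, and the leftover $-\tfrac{\mu\Lambda_T}{2}\|x_T-x_\mathtt{OPT}\|_2^2$ supplies exactly the distance-to-optimal term. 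For the dual piece I would bound $\inf M^{(T-1)}$ from below using the same three-point identity on $\Delta_k(x_\mathtt{OPT})$; here the telescoping quadratic self-cancels against the one already produced by the recursion, leaving a second $\tfrac12\sum_k\lambda_k\alpha_k\|\cdot\|_2^2$ stability term. Summing the two pieces produces the full $\sum_k\lambda_k\alpha_k\|\cdot\|_2^2$ matching the coefficient $L_0^2\sum_k\lambda_k\alpha_k$ in the target.

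The main obstacle is closing the bound under the non-Lipschitz Assumption C. Bounding each $\|\cdot\|_2^2$ by $L_0^2+L_1\delta_k(y)$ via \eqref{eq:NonLipschitz-condition} leaves a residual $L_1\sum_k\lambda_k\alpha_k\delta_k$ that must be reabsorbed. The key observation is that the gap terms $\lambda_k\delta_k$ appear with the opposite sign on the left-hand side, so the residual on step $k$ contributes a net coefficient proportional to $\lambda_k(L_1\alpha_k-1)\delta_k$: this is nonpositive exactly when $\alpha_kL_1\le 1$, i.e.\ for $k>T_0$ by \eqref{eq:T0}, and for the finitely many early steps $k\le T_0$ it is bounded by $\lambda_k\max\{L_1\alpha_k-1,0\}\max_y\delta_k(y)$, whose sum is precisely $C_0$ in \eqref{eq:C0}. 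This absorption step is the technical crux and is where the exponential early-divergence phenomenon is quarantined into $C_0$; it must be carried out simultaneously against the $x_\mathtt{OPT}$ reference (for feasible steps) and the $x_\mathtt{SL}$ reference (for infeasible steps), which is why Assumption C is posed for both $y\in\{x_\mathtt{OPT},x_\mathtt{SL}\}$.

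Finally, I would account for the switching/constraint structure and the stochasticity. On infeasible steps the increment $\Delta_k(x_\mathtt{OPT})$ is controlled using $f_{s(x_k)}(x_\mathtt{OPT})\le 0$ and the implicit multipliers $u_s=\sum_{k:\,s(x_k)=s}\lambda_k/\sum_{k:\,s(x_k)=0}\lambda_k$, which make $M^{(T-1)}$ a valid Lagrangian lower bound on $p_\star$ (so that $\mathtt{dual\mbox{-}gap}_T\ge 0$); the Slater hypothesis (Assumption B) and \Cref{prop:slater-ratio} guarantee $\sum_{k<T:\,s(x_k)=0}\lambda_k>0$ so the gaps are well-defined. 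The expectation is taken last, using the tower property together with conditional unbiasedness $\mathbb{E}_{\xi_k}g_s(x_k;\xi_k)\in\partial f_s(x_k)$, so that the inner products linear in the noise vanish and the variance is folded into $L_0,L_1$ through Assumption C. Collecting the telescoped distance term, the full stability bound $L_0^2\sum_k\lambda_k\alpha_k$, and the quarantined constant $C_0$ yields the stated inequality.
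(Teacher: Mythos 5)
Your proposal follows essentially the same route as the paper's proof: the per-step primal inequality you describe is exactly Lemma~\ref{lem:switching-proximal-primal-induction}, your lower-bound recursion for $\inf M^{(k)}$ via the quadratic model structure (Lemmas~\ref{lem:standard-quadratic}, \ref{lem:quad-calc}, \ref{lem:Mk-minimization}) is Lemma~\ref{lem:switching-proximal-dual-induction}, and your absorption of the $L_1$ residuals into $C_0$ with each piece paying half of $L_0^2\sum_k\lambda_k\alpha_k + C_0$ matches the paper's final summation argument. The only minor discrepancy is that the paper runs the absorption entirely against the $x_\mathtt{OPT}$ reference (valid because $\delta_k(x_\mathtt{OPT})\geq 0$ on infeasible steps as well), with $x_\mathtt{SL}$ entering only through Proposition~\ref{prop:slater-ratio} and the maximum in the definition of $C_0$, rather than switching references by feasibility as you suggest; this does not affect the correctness of your plan.
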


\begin{remark}
Theorem~\ref{thm:super-rate} recovers the primal convergence rate of~\cite[Section 3.2]{Lacoste-Julien2012} with $\alpha_k=2/(\mu(k+2))$ and extends it to be a primal-dual guarantee covering proximal, switching, and non-Lipschitz settings. Theorem~\ref{thm:super-equivalence} shows this stepsize corresponds to dual averaging with weights $\lambda_k=k+1$ {\color{blue}and $\bar\beta=0$}. When $m=0$, Theorem~\ref{thm:super-rate} ensures
$$ \mathbb{E}_\xi\left[\mathtt{primal\mbox{-}gap}_T + \mathtt{dual\mbox{-}gap}_T + \frac{\mu}{2}\|x_{T}-x_\mathtt{OPT}\|_2^2\right] \leq \frac{4L_0^2}{\mu (T+1)} + \frac{2C_0}{T(T+1)} $$
using that $\sum_{k=0}^{T-1} \lambda_k = T(T+1)/2$ and $\sum_{k=0}^{T-1} \lambda_k\alpha_k = 2(T - \sum_{k=2}^{T+1} 1/k)/\mu\leq 2T/\mu$.
When $m>0$ and the subgradient oracle is deterministic, applying Proposition~\ref{prop:slater-ratio} gives a rate worse by only a factor depending on the Slater point of
\begin{align*}
    &\mathtt{primal\mbox{-}gap}_T + \mathtt{dual\mbox{-}gap}_T + \frac{\mu}{2}\|x_{T}-x_\mathtt{OPT}\|_2^2\\
    &\qquad\qquad\leq \frac{2(h_{x_\mathtt{SL}}(x_\mathtt{SL}) - \inf h_{x_\mathtt{SL}}) + \tau_\mathtt{SL}}{{\color{blue}\tau_\mathtt{SL} - \frac{4L_0^2}{\mu (T+1)} - \frac{2C_0}{T(T+1)}}} \left( \frac{4L_0^2}{\mu (T+1)} + \frac{2C_0}{T(T+1)}\right) \ . 
\end{align*}
Here, the role of $C_0$, defined in~\eqref{eq:C0}, bounding the effect of the non-Lipschitz constant $L_1$ becomes clear. The only role $L_1$ plays in our rate is via $T_0$, which in turn $C_0$ may be exponentially large in (see Proposition~\ref{prop:exp-bound}). If $C_0$ is small, then convergence will be dominated by the classic $O(L_0^2/\mu T)$ term as the dependence on $C_0$ shrinks at a fast $O(1/T^2)$ rate.
\end{remark}

\begin{remark}
Theorem~\ref{thm:super-rate} further recovers and extends the classic linear convergence of proximal gradient descent for smooth, strongly convex optimization. Assume $m=0$ and $f_0$ is $\beta$-smooth with $g_0(x;\xi) = \nabla f_0(x)$. Then~\eqref{eq:NonLipschitz-condition} holds with $L_0=0$ and $L_1=2\beta>\mu$.\footnote{This can be verified by noting $h_{x_\mathtt{OPT}}$ is $\beta$-smooth. Then the standard descent lemma ensures $$h_{x_\mathtt{OPT}}(x_\mathtt{OPT})\leq h_{x_\mathtt{OPT}}(x_{k} - (\nabla f_0(x_k)+n_{x_\mathtt{OPT}})/\beta) \leq h_{x_\mathtt{OPT}}(x_k) - \frac{1}{2\beta}\|\nabla f_0(x_k)+n_{x_\mathtt{OPT}}\|^2_2.$$} Consider the stepsize selection with $\alpha_0 = 1/\mu$ and $\alpha_k = 1/L_1$ constant thereafter, which corresponds to dual weights $\lambda_0=1$ and $\lambda_k = \frac{\mu}{L_1}(1 - \mu/L_1)^{-k}$ {\color{blue}, for $k>0$ }. This choice has $T_0=0$ and $C_0 = (\frac{L_1}{\mu}-1)\delta_0(x_\mathtt{OPT})$, giving the following linear convergence
$$ \mathtt{primal\mbox{-}gap}_T + \mathtt{dual\mbox{-}gap}_T + \frac{\mu}{2}\|x_{T}-x_\mathtt{OPT}\|_2^2\leq \frac{L_1}{\mu}\delta_0(x_\mathtt{OPT})\left(1 - \frac{\mu}{L_1}\right)^{T}, $$
using that $\sum_{k=0}^{T-1} \lambda_k = (1-\mu/L_1)^{-(T-1)} $.
\end{remark}

\begin{remark}
Theorem~\ref{thm:super-rate} provides new non-Lipschitz conditions for limiting primal-dual guarantees: Under Assumptions A-C and given a deterministic subgradient oracle,
$$\lim_{T\rightarrow \infty} \max\left\{ \mathtt{primal\mbox{-}gap}_T,\ \mathtt{dual\mbox{-}gap}_T,\|x_{T}-x_\mathtt{OPT}\|_2^2 \right\} = 0 $$
if $C_0$ is finite and $\sum_{k=0}^T \lambda_k\alpha_k/\sum_{k=0}^T \lambda_k \rightarrow 0$. Note this implies $\sum_{k=0}^T \lambda_k\rightarrow\infty$ since $\alpha_k,\lambda_k>0$. The classic conditions needed for limiting primal convergence under Lipschitz continuity are $\alpha_k\rightarrow 0$, which implies $C_0$ is finite, and $\sum^{T}_{k=0} \alpha_k^2/\sum^{T}_{k=0} \alpha_k \rightarrow 0$, which differs slightly from our theory when $\alpha_k=\lambda_k/\mu\sum_{i=0}^k\lambda_i$ as
$$ \text{classically, one needs } \frac{\sum_{k=0}^T \frac{\lambda_k\alpha_k}{\sum_{i=0}^k \lambda_i}}{\sum_{k=0}^T \frac{\lambda_k}{\sum_{i=0}^k \lambda_i}}\rightarrow 0 \quad \text{whereas we require} \quad \frac{\sum_{k=0}^T \lambda_k\alpha_k}{\sum_{k=0}^T \lambda_k}\rightarrow 0 \ . $$
\end{remark}

\begin{remark}
One can select stepsizes to minimize our rate. Given $C_0=0$ and the first $T$ stepsizes $\alpha_0,\dots \alpha_{T-1}$ and corresponding weights $\lambda_0,\dots\lambda_{T-1}$, one can select $\alpha_T$ and $\lambda_T$ to minimize our convergence upper bound~\eqref{eq:easy-bound} after one more step by setting\footnote{The formula for the optimal $\lambda_T$ in~\eqref{eq:optimal-lambda} can be verified as the unique solution to $\frac{d}{d\lambda_T}\left(\frac{\sum_{k=0}^T \lambda_k\alpha_k}{\sum_{k=0}^T\lambda_k}\right)=0$.}
\begin{equation}\label{eq:optimal-lambda}
    \alpha_{T} = \frac{\lambda_T}{\mu\sum_{k=0}^T \lambda_k} \quad \text{ and } \quad \lambda_{T} = \frac{\sum_{k=0}^{T-1} \lambda_k \times \sum_{k=0}^{T-1}\lambda_k\alpha_k}{\sum_{k=0}^{T-1} \lambda_k(2/\mu - \alpha_k)} \ .
\end{equation}
Given $\alpha_0=1/\mu$ and $\lambda_0=1$, the numerically optimized parameters and rate are below.

\begin{tabular}{ | r | c c c c c c c c c|}
\hline
$k$ & 0 & 1 & 2 & 3 & 4 & 5 & 6 & 7 & 8  \\ \hline
$\lambda_k$ & 1 & 1 & 1.2 & 1.4022 & 1.6025 & 1.8005 & 1.9966 & 2.1910 & 2.3841\\  
$\alpha_k$ & $\frac{1}{\mu}$ & $\frac{1}{2\mu}$ & $\frac{1}{2.6666\mu}$ & $\frac{1}{3.2820\mu}$ & $\frac{1}{3.8719\mu}$ & $\frac{1}{4.4460\mu}$ & $\frac{1}{5.0094\mu}$ & $\frac{1}{5.5648\mu}$  & $\frac{1}{6.1142\mu}$\\
Rate~\eqref{eq:easy-bound} & $\frac{L_0^2}{\mu}$ & $\frac{L_0^2}{1.3333\mu}$ & $\frac{L_0^2}{1.6410\mu}$ & $\frac{L_0^2}{1.9359\mu}$ & $\frac{L_0^2}{2.2230\mu}$ & $\frac{L_0^2}{2.5047\mu}$ & $\frac{L_0^2}{2.7824\mu}$ & $\frac{L_0^2}{3.0571\mu}$ & $\frac{L_0^2}{3.3293\mu}$ \\
\hline
\end{tabular}

\noindent For comparison, this offers small gains over the ``typical'' stepsize $\alpha_k = 2/\mu(k+2)$, shown below. Numerics showing some small gains actually occur are in Section~\ref{sec:numerics}.

\begin{tabular}{ | r | c c c c c c c c c|}
\hline
$k$ & 0 & 1 & 2 & 3 & 4 & 5 & 6 & 7 & 8  \\ \hline
$\lambda_k$ & 1 & 2 & 3 & 4 & 5 & 6 & 7 & 8 & 9 \\  
$\alpha_k$ & $\frac{1}{\mu}$ & $\frac{1}{1.5\mu}$ & $\frac{1}{2\mu}$ & $\frac{1}{2.5\mu}$ & $\frac{1}{3\mu}$ & $\frac{1}{3.5\mu}$ & $\frac{1}{4\mu}$ & $\frac{1}{4.5\mu}$  & $\frac{1}{5\mu}$\\
Rate~\eqref{eq:easy-bound} & $\frac{L_0^2}{\mu}$ & $\frac{L_0^2}{1.2857\mu}$ & $\frac{L_0^2}{1.5652\mu}$ & $\frac{L_0^2}{1.8404\mu}$ & $\frac{L_0^2}{2.1126\mu}$ & $\frac{L_0^2}{2.3824\mu}$ & $\frac{L_0^2}{2.6504\mu}$ & $\frac{L_0^2}{2.9168\mu}$ & $\frac{L_0^2}{3.1819\mu}$ \\
\hline
\end{tabular}
\end{remark}

\begin{remark}
    For deterministic settings {\color{blue} without regularization (i.e., supposing $g_s(x;\xi)$ is independent of $\xi$ and $r=0$ which allow expectations and $n_{y}$ to be omitted from Theorem~\ref{thm:super-rate} respectively)} where $\mu$ and an upper bound $G^2 \geq L_0^2$ are known, one can also utilize our theory to adapt stepsizes to avoid any early exponential divergences. Recall such divergences are quantified by $C_0$ as discussed at the end of Section~\ref{sec:prelim}. If one selects decreasing stepsizes with $\alpha_0=1/\mu$ and $\alpha_k \leq 1/L_1$ thereafter {\color{blue} and $\lambda_0=1$}, $T_0$ defined in~\eqref{eq:T0} {\color{blue} is at most} zero, and hence no bad divergence can occur as
    \begin{align}
        &\left(\frac{\sum_{k< T: s(x_k)=0} \lambda_k}{\sum_{k=0}^{T-1} \lambda_k}\right)\left(\mathtt{primal\mbox{-}gap}_T + \mathtt{dual\mbox{-}gap}_T\right) \nonumber\\
        &\qquad\qquad\leq \frac{ G^2\sum_{k=0}^{T-1}\lambda_k\alpha_k + (\frac{1}{\alpha_1\mu}-1)\frac{\|g_{s(x_0)}(x_0;\xi_0)\|^2}{2\mu}}{\sum_{k=0}^{T-1}\lambda_k}\label{eq:checkable}
    \end{align}
    where we bounded $L_0^2 \leq G^2$ and $C_0 \leq(\frac{L_1}{\mu} - 1)\delta_0(y) \leq (\frac{1}{\alpha_1\mu}-1)\frac{\|g_{s(x_0)}(x_0;\xi_0)\|^2}{2\mu}$\footnote{{\color{blue} The inequality $\delta_0(y) \leq \|g_{s(x_0)}(x_0;\xi_0)\|^2/2\mu$ follows from the strong convexity of $f_{s(x_0)}$ and $n_{y}=0$ as
    \begin{align*}
        \delta_{0}(y) &= f_{s(x_0)}(x_0) - f_{s(x_0)}(y)\\
        &\leq f_{s(x_0)}(x_0) - \inf_z f_{s(x_0)}(z)\\ &\leq f_{s(x_0)}(x_0) - \inf_z \left(f_{s(x_0)}(x_0) + \langle g_{s(x_0)}(x_0;\xi_0), z-x_0\rangle +\frac{\mu}{2}\|z-x_0\|^2_2\right)\\
        &=\|g_{s(x_0)}(x_0;\xi_0)\|^2/2\mu.
    \end{align*}}} for either $y\in\{x_{\mathtt{OPT}}, x_{\mathtt{SL}}\}$. Notice every quantity in~\eqref{eq:checkable} is computable! Hence, if one selected generic decreasing stepsizes $\alpha_k$, without knowing $L_1$ to ensure $\alpha_k\leq 1/L_1$ {\color{blue} for $k\geq 1$}, one can still check if convergence is occurring at the above rate. If~\eqref{eq:checkable} fails at some iteration, one can conclude $\alpha_1 > 1/L_1$. In this case, one could reasonably restart the method with reduced stepsizes, via an exponential backtracking. 
\end{remark}

\begin{remark}
    Without strong convexity, one cannot guarantee convergence of a duality gap since a linear $M^{(k)}$ leads the duality gap to always be $0$ or $\infty$. Our theory can still be applied by a standard trick: Instead of unconstrained minimization ($m=0$) of a convex function $f_0$, one could minimize the closely related strongly convex function
    $$\tilde f_0(x) = f_0(x) + \frac{\epsilon}{2D^2}\|x-x_0\|^2 \ . $$
    This perturbed problem has minimum value at most $p_\star +\epsilon\frac{\|x_\mathtt{OPT}-x_0\|^2_2}{2D^2}$, and so any $\epsilon$-minimizer of $\tilde f_0$ is an $(1+\frac{\|x_\mathtt{OPT}-x_0\|^2_2}{2D^2})\epsilon$-minimizer for the original problem.
    
    Note $\tilde f_0$ is $\epsilon/D^2$-strongly convex and since $m=0$, one can select $x_\mathtt{SL} = x_\mathtt{OPT}$. Moreover, if $f_0$ was $M$-Lipschitz continuous {\color{blue} with a deterministic subgradient oracle}, then as a sum of Lipschitz and smooth components, the perturbed objective $\tilde f_0$ satisfies~\eqref{eq:NonLipschitz-condition} with $L_0^2 = 6M^2$ by Lemma~\ref{lem:AssumptionC-verification}. As a result, Theorem~\ref{thm:super-rate} ensures applying the subgradient method to $\tilde f_0$ with stepsize $\alpha_k = 2D^2/\epsilon(k+2)$ has perturbed primal-dual gap converge at a rate $\frac{24M^2D^2}{\epsilon (T+1)} + O(1/T^2)$. Similar perturbed primal-dual guarantees using a novel proof method were given by~\cite{Gutman2022}.
\end{remark}

\subsection{Proof of Primal-Dual Convergence Guarantees}
Our theory relies on two symmetric inductive results, one inequality slightly generalizing the classic primal analysis and one novel inequality based on our dual perspective, in Lemmas~\ref{lem:switching-proximal-primal-induction} and~\ref{lem:switching-proximal-dual-induction}. From these, we prove the feasibility guarantee Proposition~\ref{prop:slater-ratio} and our main result Theorem~\ref{thm:super-rate}.

First, we show an inductive relationship on the (expected, unnormalized, squared) distance from the iterates $x_k$ to either $x_{\mathtt{OPT}}$ or $x_{\mathtt{SL}}$ defined as
\begin{equation} \label{eq:Rk-definition}
R_k(y) := \left(\frac{\mu}{2}\sum_{i=0}^{{k-1}} \lambda_i\right) \mathbb{E}_\xi\|x_k-y\|_2^2
\end{equation}
To simplify notations, throughout our analysis, we denote $g_k = g_{s(x_k)}(x_k;\xi_k)$ and $w_k = \mu\sum_{i=0}^{k} \lambda_i$ (with the convention that $w_{-1}=0$ as the given summation is empty).
\begin{lemma}\label{lem:switching-proximal-primal-induction}
    Under Assumptions A-C, the switching proximal subgradient method~\eqref{eq:super-primal-method} with $\alpha_k=\lambda_k/\mu\sum_{i=0}^k\lambda_i$ has for either $y\in\{x_{\mathtt{OPT}}, x_{\mathtt{SL}}\}$
    $$R_{k+1}(y) \leq R_k(y) - \frac{\lambda_k}{2}\left(\left(2-L_1\alpha_k\right)\mathbb{E}_\xi\delta_{k}(y) - L_0^2\alpha_k\right) \ . $$
\end{lemma}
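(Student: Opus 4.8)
The plan is to prove a single one-step inequality valid in both branches of the update~\eqref{eq:super-primal-method},
\[
\mathbb{E}_{\xi_k}\|x_{k+1}-y\|_2^2 \le (1-\mu\alpha_k)\|x_k-y\|_2^2 - \alpha_k(2-L_1\alpha_k)\delta_k(y) + L_0^2\alpha_k^2,
\]
and then convert it into the claimed recursion by taking full expectations and multiplying through by $w_k/2$. That conversion is routine: since $\alpha_k=\lambda_k/w_k$ we have $w_k\alpha_k=\lambda_k$ and $w_k\alpha_k^2=\lambda_k\alpha_k$, while $w_k(1-\mu\alpha_k)=w_k-\mu\lambda_k=w_{k-1}$; so the first two terms become $R_{k+1}(y)\le R_k(y)$ and the remainder is exactly $-\tfrac{\lambda_k}{2}\big((2-L_1\alpha_k)\mathbb{E}_\xi\delta_k(y)-L_0^2\alpha_k\big)$. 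I would do the probabilistic bookkeeping by conditioning on the history through $\xi_{k-1}$, so that $x_k$, $s(x_k)$, and hence $\delta_k(y)$ are fixed, invoke $\mathbb{E}_{\xi_k}g_s(x_k;\xi_k)\in\partial f_s(x_k)$, and only afterward take the outer expectation (which turns $\delta_k(y)$ into $\mathbb{E}_\xi\delta_k(y)$).

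For the infeasible branch, $x_{k+1}=x_k-\alpha_k g_k$ with $g_k=g_{s(x_k)}(x_k;\xi_k)$, so expanding the square gives $\|x_{k+1}-y\|_2^2=\|x_k-y\|_2^2-2\alpha_k\langle g_k,x_k-y\rangle+\alpha_k^2\|g_k\|_2^2$. Taking $\mathbb{E}_{\xi_k}$, the cross term is handled by $\mu$-strong convexity of $f_{s(x_k)}$, giving $\langle \mathbb{E}_{\xi_k}g_k,x_k-y\rangle\ge \delta_k(y)+\tfrac{\mu}{2}\|x_k-y\|_2^2$ (using $\delta_k(y)=f_{s(x_k)}(x_k)-f_{s(x_k)}(y)$ in this branch), while the quadratic term is handled by the infeasible case of Assumption~C, $\mathbb{E}_{\xi_k}\|g_k\|_2^2\le L_0^2+L_1\delta_k(y)$. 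Substituting yields the one-step bound.

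The feasible branch is the crux and where I expect the only real difficulty. The update applies $\mathrm{prox}_{\alpha_k,r}$ and so effectively moves along $g_0(x_k;\xi_k)+n_{k+1}$ with $n_{k+1}\in\partial r(x_{k+1})$, whereas both $\delta_k(y)$ and Assumption~C are phrased through the linearization $h_y$ of the objective at the reference point $y$, i.e.\ through $\hat g_k:=g_0(x_k;\xi_k)+n_y$ with $n_y\in\partial r(y)$. The key step I would use to reconcile these is nonexpansiveness of the proximal map together with the observation that the characterization~\eqref{eq:proximal-equivalence} gives $y=\mathrm{prox}_{\alpha_k,r}(y+\alpha_k n_y)$ (since $\tfrac{1}{\alpha_k}((y+\alpha_k n_y)-y)=n_y\in\partial r(y)$). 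Writing $x_{k+1}=\mathrm{prox}_{\alpha_k,r}(x_k-\alpha_k g_0(x_k;\xi_k))$ and applying $\|\mathrm{prox}_{\alpha_k,r}(a)-\mathrm{prox}_{\alpha_k,r}(b)\|_2\le\|a-b\|_2$ with $a=x_k-\alpha_k g_0(x_k;\xi_k)$ and $b=y+\alpha_k n_y$ gives
\[
\|x_{k+1}-y\|_2^2 \le \|x_k-y-\alpha_k\hat g_k\|_2^2 = \|x_k-y\|_2^2 - 2\alpha_k\langle \hat g_k, x_k-y\rangle + \alpha_k^2\|\hat g_k\|_2^2,
\]
i.e.\ the proximal subgradient step lands at least as close to $y$ as an honest subgradient step on $h_y$ would. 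From here the feasible case mirrors the infeasible one: $\mu$-strong convexity of $h_y$ (note $\mathbb{E}_{\xi_k}\hat g_k\in\partial h_y(x_k)$) gives $\langle \mathbb{E}_{\xi_k}\hat g_k, x_k-y\rangle\ge h_y(x_k)-h_y(y)+\tfrac{\mu}{2}\|x_k-y\|_2^2=\delta_k(y)+\tfrac{\mu}{2}\|x_k-y\|_2^2$, and the feasible case of Assumption~C gives $\mathbb{E}_{\xi_k}\|\hat g_k\|_2^2\le L_0^2+L_1\delta_k(y)$, producing the same one-step bound.

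If one prefers not to cite prox nonexpansiveness directly, the identical reduction follows from the strongly convex prox inequality for the minimizer $x_{k+1}$ evaluated at the comparison point $y$, after replacing $r(x_{k+1})\ge r(y)+\langle n_y,x_{k+1}-y\rangle$ and simplifying with $x_{k+1}-(x_k-\alpha_k g_0)=-\alpha_k n_{k+1}$; this reproduces the displayed inequality up to an extra favorable term $-\alpha_k^2\|n_{k+1}-n_y\|_2^2\le0$ that is simply discarded. Either way, once the unified one-step bound is in hand, the final rescaling by $w_k/2$ and the two identities $w_k\alpha_k=\lambda_k$, $w_k(1-\mu\alpha_k)=w_{k-1}$ deliver the stated recursion, and I expect nothing beyond careful bookkeeping after the feasible-branch reconciliation.
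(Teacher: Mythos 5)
Your proposal is correct and follows essentially the same route as the paper's proof: the same fixed-point observation $y=\mathrm{prox}_{\alpha_k,r}(y+\alpha_k n_y)$ combined with prox nonexpansiveness in the feasible branch, the same strong-convexity bound on the cross term (via $h_y$ or $f_{s(x_k)}$), and the same application of Assumption~C in both branches. The only cosmetic difference is that you prove a normalized one-step inequality and rescale by $w_k/2$ at the end, while the paper carries the weighted quantities $R_k(y)$ through the whole computation; the identities $w_k\alpha_k=\lambda_k$ and $w_k(1-\mu\alpha_k)=w_{k-1}$ you invoke make these equivalent.
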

\begin{proof}
    This proof follows a standard analysis technique, directly expanding the definition of $R_{k+1}(y)$. First, suppose $x_k$ is feasible. Then
    \begin{align*}
    R_{k+1}(y) &= \frac{w_k}{2}\mathbb{E}_\xi\|\mathrm{prox}_{\alpha_k,r}(x_k - \alpha_k g_k) - \mathrm{prox}_{\alpha_k,r}(y +\alpha_k n_{y})\|_2^2\\
    &\leq \frac{w_k}{2}\mathbb{E}_\xi\|x_k - \alpha_k(g_k+n_{y}) - y\|_2^2\\
    &= \frac{w_k}{2}\mathbb{E}_\xi\|x_k-y\|_2^2 - \lambda_k\mathbb{E}_\xi\langle g_k + n_{y}, x_k-y\rangle +\frac{\lambda_k\alpha_k}{2}\mathbb{E}_\xi\|g_k+n_{y}\|_2^2\\
    &\leq \frac{w_k}{2}\mathbb{E}_\xi\|x_k-y\|_2^2 - \lambda_k\mathbb{E}_\xi(h_y(x_k)-h_y(y)+\frac{\mu}{2}\|x_k - y\|_2^2) + \frac{\lambda_k\alpha_k}{2}\mathbb{E}_\xi\|g_k+n_{y}\|_2^2\\
    &= R_k(y) - \lambda_k\mathbb{E}_\xi\delta_k(y)+\frac{\lambda_k\alpha_k}{2}\mathbb{E}_\xi\|g_k+n_{y}\|_2^2
   \end{align*}
    where the first line uses that $\mathrm{prox}_{\alpha_k,r}(y +\alpha_k n_{y}) = y$, the second uses the nonexpansiveness of the proximal operator~\cite[Proposition 12.19]{RockWets98}, the third factors the norm squared and uses $\alpha_k = \lambda_k/w_k$, the fourth uses the strong convexity of $h_y$, {\color{blue} and the fifth follows from definitions as $R_k(y) = \frac{w_k - \lambda_k\mu}{2}\mathbb{E}_\xi\|x_k-y\|_2^2$ and $\delta_k(y)=h_y(x_k)-h_y(y)$}.
    Then, applying the bound~\eqref{eq:NonLipschitz-condition} gives the claim.
    Similarly, supposing $x_k$ is infeasible,
    \begin{align*}
    R_{k+1}(y) &= \frac{w_k}{2}\mathbb{E}_\xi\|x_k - \alpha_k g_k - y\|_2^2\\
    &= \frac{w_k}{2}\mathbb{E}_\xi\|x_k-y\|_2^2 - \lambda_k\mathbb{E}_\xi\langle g_k, x_k-y\rangle + \frac{\lambda_k\alpha_k}{2}\mathbb{E}_\xi\|g_k\|_2^2\\
    &\leq \frac{w_k}{2}\mathbb{E}_\xi\|x_k-y\|_2^2- \lambda_k\mathbb{E}_\xi(f_{s(x_k)}(x_k)-f_{s(x_k)}(y) +\frac{\mu}{2}\|x_k - y\|_2^2) + \frac{\lambda_k\alpha_k}{2}\mathbb{E}_\xi\|g_k\|_2^2\\
    &= R_k(y) - \lambda_k\mathbb{E}_\xi\delta_k(y)+\frac{\lambda_k\alpha_k}{2}\mathbb{E}_\xi\|g_k\|_2^2 
   \end{align*}
   using strong convexity of $f_{s(x_k)}$. Applying~\eqref{eq:NonLipschitz-condition} completes the proof.
\end{proof}
The dual portion of our convergence analysis relies on showing the same inductive relationship on the (expected, unnormalized) dual gap defined as
\begin{equation} \label{eq:Dk-definition}
    D_k := \mathbb{E}_\xi\left[\left(\sum_{i< k:s(x_i)=0} \lambda_i\right) p_\star - \inf M^{(k-1)}\right] \ .
\end{equation}
{\color{blue} Note this relates to the dual gap as $D_k = \mathbb{E}_\xi\left[\left(\sum_{i< k:s(x_i)=0} \lambda_i\right)\mathtt{dual\mbox{-}gap}_k\right]$.} 
\begin{lemma}\label{lem:switching-proximal-dual-induction}
    Under Assumptions A-C, the switching proximal subgradient method~\eqref{eq:super-primal-method} with $\alpha_k=\lambda_k/\mu\sum_{i=0}^k\lambda_i$ has
     $$ D_{k+1} \leq D_k - \frac{\lambda_k}{2}\begin{cases}
         \left(2-L_1\alpha_k\right)\mathbb{E}_\xi\delta_{k}(x_\mathtt{OPT}) - L_0^2\alpha_k & \text{if } x_k \text{ is feasible}\\
         {\color{blue}\left(2-L_1\alpha_k\right)\mathbb{E}_\xi\delta_{k}(x_\mathtt{OPT}) + 2\mathbb{E}_\xi f_{s(x_k)}(x_\mathtt{OPT}) - L_0^2\alpha_k} & \text{otherwise.}
     \end{cases} $$
\end{lemma}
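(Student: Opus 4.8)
The plan is to prove both cases by directly evaluating the change $\inf M^{(k)}-\inf M^{(k-1)}$ in closed form, in the same spirit as the direct expansion of $R_k$ in Lemma~\ref{lem:switching-proximal-primal-induction}. Throughout I identify the primal and dual iterates via Theorem~\ref{thm:super-equivalence} ($x_k=y_k$), so that the proximal subgradient $n_{k+1}$ produced by the feasible step of~\eqref{eq:super-primal-method} is exactly the one appearing in $M^{(k)}$. By Lemma~\ref{lem:Mk-minimization} (with $\bar\beta=0$) and Lemma~\ref{lem:standard-quadratic}, $M^{(k)}$ is a quadratic with Hessian $w_k I$ whose unique minimizer is $x_{k+1}$; expanding $M^{(k)}(x_{k+1})$ and using $\alpha_k=\lambda_k/w_k$ together with the update rule gives
\[ \inf M^{(k)}-\inf M^{(k-1)} = \begin{cases} \lambda_k f_0(x_k)+\lambda_k r(x_{k+1})+\lambda_k\langle n_{k+1},x_k-x_{k+1}\rangle-\frac{\lambda_k\alpha_k}{2}\|g_k+n_{k+1}\|_2^2 & \text{if }x_k\text{ feasible,}\\ \lambda_k f_{s(x_k)}(x_k)-\frac{\lambda_k\alpha_k}{2}\|g_k\|_2^2 & \text{otherwise.} \end{cases} \]
Since $\sum_{i<k:s(x_i)=0}\lambda_i$ increases by $\lambda_k$ exactly when $x_k$ is feasible, subtracting these identities from $\lambda_k p_\star$ (feasible) or $0$ (infeasible) produces $D_{k+1}-D_k$ in each case.

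In the infeasible case this immediately yields $D_{k+1}-D_k=-\lambda_k f_{s(x_k)}(x_k)+\frac{\lambda_k\alpha_k}{2}\mathbb{E}_{\xi_k}\|g_k\|_2^2$. Applying the non-Lipschitz bound~\eqref{eq:NonLipschitz-condition} with $y=x_\mathtt{OPT}$ to the squared-norm term and rewriting $f_{s(x_k)}(x_k)=\delta_k(x_\mathtt{OPT})+f_{s(x_k)}(x_\mathtt{OPT})$ through the definition~\eqref{eq:delta_k-definition} gives precisely the claimed ``otherwise'' bound. This mirrors the infeasible branch of the primal proof, where $\|g_k\|_2^2$ is controlled directly.

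The feasible case is the crux. Using the subgradient inequality $r(x_\mathtt{OPT})\ge r(x_{k+1})+\langle n_{k+1},x_\mathtt{OPT}-x_{k+1}\rangle$, the identity $p_\star=f_0(x_\mathtt{OPT})+r(x_\mathtt{OPT})$, and the definition of $\delta_k(x_\mathtt{OPT})$, I would first reduce the feasible increment to
\[ D_{k+1}-D_k\ \le\ -\lambda_k\delta_k(x_\mathtt{OPT})+\lambda_k\langle n_{k+1}-n_{x_\mathtt{OPT}},x_\mathtt{OPT}-x_k\rangle+\frac{\lambda_k\alpha_k}{2}\|g_k+n_{k+1}\|_2^2. \]
The difficulty is that Assumption~C controls $\|g_k+n_{x_\mathtt{OPT}}\|_2^2$, whereas the dual iteration naturally produces $\|g_k+n_{k+1}\|_2^2$ with the iterate's own proximal subgradient $n_{k+1}\in\partial r(x_{k+1})$ rather than the fixed reference $n_{x_\mathtt{OPT}}\in\partial r(x_\mathtt{OPT})$.

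I would resolve this by completing the square: substituting $x_{k+1}=x_k-\alpha_k(g_k+n_{k+1})$ gives the exact identity
\begin{align*} &\langle n_{k+1}-n_{x_\mathtt{OPT}},x_\mathtt{OPT}-x_k\rangle+\frac{\alpha_k}{2}\|g_k+n_{k+1}\|_2^2-\frac{\alpha_k}{2}\|g_k+n_{x_\mathtt{OPT}}\|_2^2\\ &\qquad=\langle n_{k+1}-n_{x_\mathtt{OPT}},x_\mathtt{OPT}-x_{k+1}\rangle-\frac{\alpha_k}{2}\|n_{k+1}-n_{x_\mathtt{OPT}}\|_2^2, \end{align*}
whose right-hand side is nonpositive: the last term is obviously so, and the inner product is $\le 0$ by monotonicity of the subdifferential $\partial r$ (since $n_{k+1}\in\partial r(x_{k+1})$ and $n_{x_\mathtt{OPT}}\in\partial r(x_\mathtt{OPT})$). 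This lets me replace $\|g_k+n_{k+1}\|_2^2$ by $\|g_k+n_{x_\mathtt{OPT}}\|_2^2$ at no cost, so that taking $\mathbb{E}_{\xi_k}$ and applying~\eqref{eq:NonLipschitz-condition} with $y=x_\mathtt{OPT}$ bounds the last term by $\frac{\lambda_k\alpha_k}{2}(L_0^2+L_1\delta_k(x_\mathtt{OPT}))$ and yields the feasible bound. I expect this monotonicity-based swap to be the main obstacle; once it is in place, the remaining steps are bookkeeping essentially identical to the primal lemma.
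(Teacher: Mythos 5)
Your overall scaffolding is sound and is essentially the paper's: the exact formula you give for $\inf M^{(k)}-\inf M^{(k-1)}$ in both cases is correct (the paper obtains the same expressions via Lemmas~\ref{lem:standard-quadratic} and~\ref{lem:quad-calc}), and your infeasible branch is handled exactly as in the paper. The feasible branch, however, contains a genuine directional error. To prove $D_{k+1}\leq D_k - \cdots$ you must \emph{lower} bound $\inf M^{(k)}$, which contains the term $+\lambda_k r(x_{k+1})$, so you need a \emph{lower} bound on $r(x_{k+1})$. The inequality you invoke, $r(x_\mathtt{OPT})\geq r(x_{k+1})+\langle n_{k+1},x_\mathtt{OPT}-x_{k+1}\rangle$, bounds $r(x_{k+1})$ from \emph{above}, and therefore can only produce a lower bound on $D_{k+1}-D_k$. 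Concretely, combining your displayed intermediate inequality with the exact identity
\begin{equation*}
D_{k+1}-D_k=\lambda_k\,\mathbb{E}_\xi\Bigl[p_\star - f_0(x_k)-r(x_{k+1})-\langle n_{k+1},x_k-x_{k+1}\rangle+\tfrac{\alpha_k}{2}\|g_k+n_{k+1}\|_2^2\Bigr]
\end{equation*}
shows that your claimed bound is algebraically equivalent to $r(x_\mathtt{OPT})\leq r(x_{k+1})+\langle n_{k+1},x_\mathtt{OPT}-x_{k+1}\rangle$, i.e., the \emph{reverse} of the subgradient inequality, which fails whenever $r$ is not affine between $x_{k+1}$ and $x_\mathtt{OPT}$ (e.g., $r(x)=\|x\|_2^2$, $x_{k+1}=0$, $n_{k+1}=0$, $x_\mathtt{OPT}\neq 0$). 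So the step you flagged as "bookkeeping" is precisely where the proof breaks.

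The repair is the move the paper makes: linearize $r$ at the \emph{reference} point rather than at the iterate, i.e., use $n_{x_\mathtt{OPT}}\in\partial r(x_\mathtt{OPT})$ to write $r(x_{k+1})\geq r(x_\mathtt{OPT})+\langle n_{x_\mathtt{OPT}},x_{k+1}-x_\mathtt{OPT}\rangle$. This yields the valid intermediate bound
\begin{equation*}
D_{k+1}-D_k\ \leq\ -\lambda_k\,\mathbb{E}_\xi\delta_k(x_\mathtt{OPT})+\lambda_k\,\mathbb{E}_\xi\langle n_{k+1}-n_{x_\mathtt{OPT}},x_{k+1}-x_k\rangle+\tfrac{\lambda_k\alpha_k}{2}\,\mathbb{E}_\xi\|g_k+n_{k+1}\|_2^2 ,
\end{equation*}
with inner product against $x_{k+1}-x_k=-\alpha_k(g_k+n_{k+1})$ rather than $x_\mathtt{OPT}-x_k$. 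Then your completing-the-square computation goes through directly and gives
\begin{equation*}
\langle n_{k+1}-n_{x_\mathtt{OPT}},x_{k+1}-x_k\rangle+\tfrac{\alpha_k}{2}\|g_k+n_{k+1}\|_2^2=\tfrac{\alpha_k}{2}\|g_k+n_{x_\mathtt{OPT}}\|_2^2-\tfrac{\alpha_k}{2}\|n_{k+1}-n_{x_\mathtt{OPT}}\|_2^2 ,
\end{equation*}
so the swap to $\|g_k+n_{x_\mathtt{OPT}}\|_2^2$ costs a manifestly nonpositive term; no appeal to monotonicity of $\partial r$ is needed at all, and Assumption C finishes the argument exactly as you outline.
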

\begin{proof}
Observe that one can rewrite $M^{(k)}(x) = Q_1(x) + Q_2(x)$ as the sum of two quadratics where $Q_1(x) = M^{(k-1)}(x)$ and $Q_2$ depends on whether $x_k$ is feasible. In the notation of Lemma~\ref{lem:quad-calc}, Lemmas~\ref{lem:Mk-minimization} {\color{blue} and~\ref{lem:standard-quadratic}} ensure $Q_1$ has $a_1 = \inf M^{(k-1)}$, $b_1 = \mu\sum_{i=0}^{k-1}\lambda_i$, and $z_1 = x_{k}${\color{blue}, since $\bar \beta = 0$ here}. To determine $Q_2$, first suppose $x_k$ is feasible. Then we have
$$Q_2(x) = \lambda_k \left(f_{0}(x_k) + \langle g_k, x-x_k \rangle +\frac{\mu}{2}\|x - x_k\|_2^2 + r(x_{k+1}) + \langle n_{k+1}, x-x_{k+1}\rangle\right) \ . $$
This quadratic can be written in the form $Q_2(z) = a_2 + \frac{b_2}{2}\|z-z_2\|_2^2$ with
\begin{align*}
    a_2 &= \lambda_k\left(f_0(x_k) + r(x_{k+1}) + \alpha_k\langle n_{k+1}, g_{k}+n_{k+1}\rangle - \frac{1}{2\mu}\|g_k + n_{k+1}\|_2^2 \right)\ , \\
    b_2 &= \mu\lambda_k \ , \qquad z_2 = x_k - \frac{g_k+n_{k+1}}{\mu} \ . 
\end{align*}
By Lemma~\ref{lem:quad-calc}, the expected minimum value of the updated model $\mathbb{E}_\xi \inf M^{(k)}$ is
\begin{align*}
    &\mathbb{E}_\xi \inf M^{(k-1)} + \lambda_k \mathbb{E}_\xi\Bigg(f_0(x_k) + r(x_{k+1}) + \alpha_k \langle n_{k+1}, g_{k}+n_{k+1}\rangle - \frac{1}{2\mu}\|g_k + n_{k+1}\|_2^2\\
    & \qquad\qquad\qquad\qquad\qquad + \frac{\sum_{i=0}^{k-1}\lambda_i }{2\mu \sum_{i=0}^k \lambda_i}\left\|g_{k}+n_{k+1}\right\|_2^2\Bigg) \ . 
\end{align*}
From this, we conclude the lower bound
\begin{align*}
    \mathbb{E}_\xi \inf M^{(k)} &\geq \mathbb{E}_\xi \inf M^{(k-1)} + \lambda_k \mathbb{E}_\xi\Bigg(f_0(x_k) + r(x_\mathtt{OPT}) + \langle n_{x_\mathtt{OPT}}, x_{k+1}-x_\mathtt{OPT}\rangle  \\
    & \quad + \alpha_k \langle n_{k+1}, g_{k}+n_{k+1}\rangle - \frac{1}{2\mu}\|g_k + n_{k+1}\|_2^2 + \frac{\sum_{i=0}^{k-1}\lambda_i }{2\mu \sum_{i=0}^k \lambda_i}\left\|g_{k}+n_{k+1}\right\|_2^2\Bigg)\\
    &= \mathbb{E}_\xi \inf M^{(k-1)} + \lambda_k \mathbb{E}_\xi\Bigg(\delta_k(x_\mathtt{OPT}) + p_\star + \langle n_{x_\mathtt{OPT}}, -\alpha_k(g_k + n_{k+1})\rangle  \\
    & \quad + \alpha_k \langle n_{k+1}, g_{k}+n_{k+1}\rangle - \frac{\alpha_k}{2}\|g_k + n_{k+1}\|_2^2\Bigg)\\
    &= \mathbb{E}_\xi\inf M^{(k-1)} + \lambda_k \left(\mathbb{E}_\xi\delta_k(x_\mathtt{OPT}) + p_\star - \frac{\alpha_k}{2}\mathbb{E}_\xi\|g_k+n_{x_\mathtt{OPT}}\|_2^2 { \color{blue} + \frac{\alpha_k}{2}\|n_{k+1}-n_{x_\mathtt{OPT}} \|_2^2}\right)\\
    & {\color{blue} \geq \mathbb{E}_\xi\inf M^{(k-1)} + \lambda_k \left(\mathbb{E}_\xi\delta_k(x_\mathtt{OPT}) + p_\star - \frac{\alpha_k}{2}\mathbb{E}_\xi\|g_k+n_{x_\mathtt{OPT}}\|_2^2\right)}
\end{align*}
where the first inequality lower bounds $r(x_{k+1})$ by $r(x_\mathtt{OPT}) + \langle n_{x_\mathtt{OPT}}, x_{k+1}-x_\mathtt{OPT}\rangle$, the first equality applies the definitions of $\delta_k(x_\mathtt{OPT})$ in~\eqref{eq:delta_k-definition}, $\alpha_k$ in~\eqref{eq:alpha-lambda} {\color{blue} with $\bar \beta = 0$, and that $x_{k+1}-x_k=-\alpha_k(g_k+n_{k+1})$}, and the {\color{blue} second} equality combines and simplifies terms.
In terms of $D_k = \mathbb{E}_\xi[\sum_{i< k:s(x_i)=0} \lambda_i p_\star - \inf M^{(k-1)}]$, this gives the following 
recurrence
$$ D_{k+1} \leq D_k - \lambda_k \mathbb{E}_\xi\delta_k(x_\mathtt{OPT}) + \frac{\lambda_k\alpha_k}{2}\mathbb{E}_\xi\|g_k+n_{x_\mathtt{OPT}}\|_2^2 \ . $$
Applying~\eqref{eq:NonLipschitz-condition} gives the claim in this case.
Now suppose $x_k$ is infeasible. Then, noting $Q_2$ minimizes at $x_k - \frac{g_{k}}{\mu}$, Lemma~\ref{lem:standard-quadratic} ensures
\begin{align*}
    Q_2(x) &= \lambda_k\left(f_{s(x_k)}(x_k) - \frac{1}{2\mu}\|g_k \|_2^2 + \frac{\mu}{2}\left\|x - \left(x_k - \frac{g_{k}}{\mu}\right)\right\|_2^2\right) \ .
\end{align*}
Then by Lemma~\ref{lem:quad-calc}, the expected minimum value of the updated model $\mathbb{E}_\xi \inf M^{(k)}$ is given by
\begin{align*}
    &\mathbb{E}_\xi\inf  M^{(k-1)} + \lambda_k\mathbb{E}_\xi\left(f_{s(x_k)}(x_k) - \frac{1}{2\mu}\|g_k \|_2^2\right) + \frac{\lambda_k\sum_{i=0}^{k-1}\lambda_i }{{\color{blue} 2}\mu \sum_{i=0}^k \lambda_i}\mathbb{E}_\xi\left\|g_{k}\right\|_2^2 \\
    &= \mathbb{E}_\xi\inf M^{(k-1)} + \lambda_k\mathbb{E}_\xi f_{s(x_k)}(x_k) - \frac{\lambda_k\alpha_k}{2}\mathbb{E}_\xi\|g_k\|_2^2 \ . 
\end{align*}
{\color{blue}
In terms of $D_k = \mathbb{E}_\xi[\sum_{i< k:s(x_i)=0} \lambda_i p_\star - \mathbb{E}_\xi\inf M^{(k-1)}]$, this gives the recurrence
$$ D_{k+1} = D_k - \lambda_k \mathbb{E}_\xi\left[\delta_{k}(x_\mathtt{OPT}) + f_{s(x_k)}(x_{\mathtt{OPT}})\right] + \frac{\lambda_k\alpha_k}{2} \mathbb{E}_\xi\|g_k\|_2^2 \ . $$
Bounding $\mathbb{E}_\xi\|g_k\|_2^2$ by~\eqref{eq:NonLipschitz-condition} gives the claim in this last case.
}
\end{proof}

As a direct consequence of our primal inductive lemma, we can bound the rate that $\delta_k(y)$ grows in the first $T_0$ iterations as being at most exponential. From this, one can explicitly upper bound $C_0$ exponentially in $T_0$.
\begin{proposition}\label{prop:exp-bound}
    Under Assumptions A-C, the switching proximal subgradient method~\eqref{eq:super-primal-method} with $\alpha_k=\lambda_k/\mu\sum_{i=0}^k\lambda_i$ has for either $y\in\{x_{\mathtt{OPT}}, x_{\mathtt{SL}}\}$
    \begin{align*}
        |\delta_k(y)| & \leq L_1\|x_k - y\|_2^2 + \frac{L_0^2}{L_1} \ , \\
         \mathbb{E}_\xi \|x_{T}-y\|^2_2 & \leq \left(1 + \frac{\max\{2, L_1/\mu-2\}L_1}{\mu}\right)^T\left(\|x_{0}-y\|^2_2 + \frac{L_0^2}{{\color{blue}L_1^2}} + \frac{L_0^2}{\mu\max\{2, L_1/\mu-2\}{\color{blue}L_1}}\right)\ . 
    \end{align*}
\end{proposition}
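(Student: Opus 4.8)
The plan is to prove the two inequalities in sequence, since the distance bound invokes the $\delta_k$ bound. For the first inequality I would argue pointwise, conditional on $x_k$. The lower bound $\delta_k(y) \geq -L_0^2/L_1$ comes for free: the right-hand side $L_0^2 + L_1\delta_k(y)$ of Assumption~C~\eqref{eq:NonLipschitz-condition} dominates the nonnegative quantity $\mathbb{E}_{\xi_k}\|\cdot\|_2^2$, so it must itself be nonnegative, and $-L_0^2/L_1 \geq -(L_1\|x_k-y\|_2^2 + L_0^2/L_1)$. For the upper bound, let $\bar g_k$ be the expected subgradient at $x_k$ of $h_y$ (if $x_k$ is feasible) or of $f_{s(x_k)}$ (otherwise), so $\bar g_k \in \partial(\cdot)(x_k)$ since $\mathbb{E}_{\xi_k} g_s(x_k;\xi_k)\in\partial f_s(x_k)$. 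Convexity gives $\delta_k(y) \leq \langle \bar g_k, x_k - y\rangle \leq \|\bar g_k\|_2\|x_k - y\|_2$, and Young's inequality bounds this by $\frac{1}{2L_1}\|\bar g_k\|_2^2 + \frac{L_1}{2}\|x_k-y\|_2^2$. Jensen's inequality and Assumption~C then give $\|\bar g_k\|_2^2 \leq \mathbb{E}_{\xi_k}\|\cdot\|_2^2 \leq L_0^2 + L_1\delta_k(y)$; substituting and solving $\delta_k(y) \leq \frac12\delta_k(y) + \frac{L_0^2}{2L_1} + \frac{L_1}{2}\|x_k-y\|_2^2$ for $\delta_k(y)$ yields the upper bound.

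For the second inequality I would convert Lemma~\ref{lem:switching-proximal-primal-induction} into a scalar geometric recursion. Write $D_k := \mathbb{E}_\xi\|x_k-y\|_2^2$ and $w_k := \mu\sum_{i=0}^k\lambda_i$, so that $R_k(y) = \tfrac{w_{k-1}}{2}D_k$ (with $w_{-1}=0$) and $\alpha_k = \lambda_k/w_k$. Taking the full-path expectation of the first inequality yields $|\mathbb{E}_\xi\delta_k(y)| \leq L_1 D_k + L_0^2/L_1$, so bounding $-\tfrac{\lambda_k}{2}(2-L_1\alpha_k)\mathbb{E}_\xi\delta_k(y) \leq \tfrac{\lambda_k}{2}|2-L_1\alpha_k|(L_1 D_k + L_0^2/L_1)$ inside Lemma~\ref{lem:switching-proximal-primal-induction}, multiplying by $2$, and substituting $w_{k-1}=w_k-\mu\lambda_k$ gives
\[ D_{k+1} \leq \big(1 + \alpha_k(L_1|2-L_1\alpha_k| - \mu)\big)D_k + \alpha_k L_0^2\big(|2-L_1\alpha_k|/L_1 + \alpha_k\big) . \]

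I would then bound the data-dependent coefficients using the uniform facts $\alpha_k \leq 1/\mu$ (since $\lambda_k \leq \sum_{i=0}^k\lambda_i$) and $|2-L_1\alpha_k| \leq \kappa := \max\{2, L_1/\mu - 2\}$ (as $L_1\alpha_k \in (0, L_1/\mu]$). Crudely bounding the multiplicative coefficient by $a := 1 + \kappa L_1/\mu$ (discarding $-\alpha_k\mu \leq 0$) and the additive term by $c := \kappa L_0^2/(\mu L_1) + L_0^2/\mu^2$ yields $D_{k+1} \leq a D_k + c$ for all $k\geq 0$; the empty-sum convention makes the $k=0$ step automatic, where $-\alpha_0\mu = -1$ exactly cancels the leading $1$. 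Unrolling this geometric recursion gives $D_T \leq a^T D_0 + c(a^T-1)/(a-1)$, and since $a - 1 = \kappa L_1/\mu$ one computes $c/(a-1) = L_0^2/L_1^2 + L_0^2/(\mu\kappa L_1)$, so with $D_0 = \|x_0-y\|_2^2$ the bound collapses to exactly the claimed expression.

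The main obstacle is recognizing that the lower bound on $\delta_k(y)$ is handed to us by the nonnegativity already built into Assumption~C; a direct convexity argument instead would require controlling a subgradient norm at $y$ that Assumption~C does not bound. After that, the only delicate point is correctly tracking the sign of $2 - L_1\alpha_k$ through the absolute value and the $w_{-1}=0$ convention when reducing the $R_k$ inequality to the scalar recursion, the subsequent geometric unrolling being routine.
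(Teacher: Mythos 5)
Your proposal is correct and follows essentially the same route as the paper: the lower bound on $\delta_k(y)$ from the nonnegativity built into Assumption~C, the upper bound from convexity plus a subgradient-norm bound, reduction of Lemma~\ref{lem:switching-proximal-primal-induction} to a scalar recursion $D_{k+1}\leq a D_k+c$ with the same constants, and geometric unrolling. The only (cosmetic) difference is in the upper bound on $\delta_k(y)$: the paper applies Cauchy--Schwarz to get $\delta_k(y)\leq \sqrt{L_0^2+L_1\delta_k(y)}\,\|x_k-y\|_2$ and then solves the resulting quadratic inequality, whereas you use Young's inequality to obtain a linear self-bounding inequality in $\delta_k(y)$ — an equally valid, slightly cleaner piece of algebra arriving at the identical constant.
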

\begin{proof}
    We first claim $x_k$ satisfies
    $$ -\frac{L_0^2}{L_1} \leq \delta_{k}(y) \leq \sqrt{L_0^2 + L_1\delta_k(y)}\ \|x_k - y\|_2 \ . $$
    The first inequality lower bounding $\delta_k(y)$ is immediate from~\eqref{eq:NonLipschitz-condition}.
    For the second inequality, note that if $x_k$ is feasible, convexity of $h_y$ ensures that $h_{y}(y) \geq h_{y}(x_k) + \langle \mathbb{E}_{\xi_k} (g_{s(x_k)}(x_k;\xi_k) + n_{y}), y-x_k\rangle$. If $x_k$ is infeasible, $f_{s(x_k)}(y) \geq f_{s(x_k)}(x_k) + \langle \mathbb{E}_{\xi_k} g_{s(x_k)}(x_k;\xi_k), y-x_k\rangle$. In either case, Cauchy-Schwarz and Assumption C give the second inequality.
    Observe that if $\delta_k(y)<0$ , the first inequality above ensures $|\delta_k(y)| \leq \frac{L_0^2}{L_1} \leq L_1\|x_k - y\|_2^2 + \frac{L_0^2}{L_1}$. Instead, if $\delta_k(y)\geq 0$, squaring the second inequality above ensures $\delta_k(y)^2 \leq (L_0^2 + L_1\delta_k(y))\|x_k-y\|^2_2$, which implies
    $$\delta_k(y) \leq \frac{L_1+\sqrt{L_1^2 + 4L_0^2/\|x_k-y\|_2^2}}{2}\|x_k-y\|_2^2\leq  \frac{L_0^2}{L_1} + L_1 \|x_k - y\|_2^2,$$
    where the second inequality uses concavity to bound {\color{blue} $\sqrt{a+b} \leq \sqrt{a} + \frac{b}{2\sqrt{a}}$, given $a>0$}, completing the proposition's first claim.
    To prove the proposition's second claim, note
    \begin{align*}
         \mathbb{E}_\xi \|x_{k+1}-y\|^2_2 &\leq \mathbb{E}_\xi \|x_{k}-y\|^2_2 - \frac{\lambda_k}{\mu\sum_{i=0}^{k} \lambda_i}((2-L_1\alpha_k)\mathbb{E}_\xi \delta_k(y) - L_0^2\alpha_k)\\
         &\leq \mathbb{E}_\xi \|x_{k}-y\|^2_2 + \alpha_k(|2-L_1\alpha_k||\mathbb{E}_\xi \delta_k(y)| + L_0^2\alpha_k)\\
         &\leq\left(1 + \alpha_k|2-L_1\alpha_k|L_1\right) \mathbb{E}_\xi \|x_{k}-y\|^2_2 + \alpha_k|2-L_1\alpha_k|\frac{L_0^2}{L_1} + L_0^2\alpha^2_k\\
         &\leq \left(1 + \frac{\max\{2, L_1/\mu-2\}L_1}{\mu}\right) \mathbb{E}_\xi \|x_{k}-y\|^2_2 + \frac{\max\{2, L_1/\mu-2\}}{\mu}\frac{L_0^2}{L_1} + \frac{L_0^2}{\mu^2},
    \end{align*}
    where the first inequality uses Lemma~\ref{lem:switching-proximal-primal-induction} divided by $(\frac{\mu}{2}\sum_{i=0}^k\lambda_i)$, the second inequality applies simple upper bounds, the third inequality uses our bound on $|\delta_k(y)|$, and the fourth uses that $0<\alpha_k\leq 1/\mu$ and its consequence $|2-L_1\alpha_k|\leq \max\{2, L_1/\mu-2\}$. From this, the proposition's second claim follows as such recurrences of the form $a_{k+1} \leq b\cdot a_k + c$ satisfy $a_T \leq b^Ta_0 + c\frac{b^T-1}{b-1} \leq b^T(a_0 + \frac{c}{b-1})${\color{blue} , where $b>1$ and $c\geq0$}.
\end{proof}

\subsubsection{Proof of Proposition~\ref{prop:slater-ratio}} \label{subsec:proof-slater-ratio}
Noting $ R_0(x_\mathtt{SL}) =0$ and $ R_{T}(x_\mathtt{SL})  \geq 0$, inductively applying Lemma~\ref{lem:switching-proximal-primal-induction} with $y=x_\mathtt{SL}$ shows
$$ \sum_{k=0}^{T-1}\frac{\lambda_k}{2}\left((2 - L_1\alpha_k)\mathbb{E}_\xi\delta_k(x_\mathtt{SL}) - L_0^2\alpha_k\right) \leq 0\ . $$
From this, we find that
\begin{align*}
   0 & \leq \sum_{k=0}^{T-1}\lambda_k(L_1\alpha_k - 2)\mathbb{E}_\xi\delta_k(x_\mathtt{SL}) + \sum_{k=0}^{T-1}L_0^2\lambda_k\alpha_k\\
    &=\mathbb{E}_\xi\left[\sum_{k=0}^{T-1}\lambda_k(L_1\alpha_k - 2)(\max\{\delta_k(x_\mathtt{SL}),0\} + \min\{\delta_k(x_\mathtt{SL}),0\})\right] + \sum_{k=0}^{T-1}L_0^2\lambda_k\alpha_k\\
    & \leq C_0 + \mathbb{E}_\xi\left[\sum_{k=0}^{T-1}-\lambda_k\max\{\delta_k(x_\mathtt{SL}),0\} +\sum_{k=0}^{T-1}\lambda_k(L_1\alpha_k - 2)\min\{\delta_k(x_\mathtt{SL}),0\}\right] + \sum_{k=0}^{T-1}L_0^2\lambda_k\alpha_k\\
    & \leq C_0 - \tau_\mathtt{SL}\mathbb{E}_\xi\left[\sum_{k< T: s(x_k)\neq0}\lambda_k\right] + 2(h_{x_\mathtt{SL}}(x_\mathtt{SL}) - \inf h_{x_\mathtt{SL}})\mathbb{E}_\xi\left[\sum_{k< T: s(x_k)=0}\lambda_k\right] + \sum_{k=0}^{T-1}L_0^2\lambda_k\alpha_k
\end{align*}
where the first inequality uses our inductive result, the second inequality uses the definition of $C_0$ in~\eqref{eq:C0} and that $\delta_k(x_\mathtt{OPT})\geq 0$, and the third inequality bounds the first two summations as follows: (i) the first sum's upper bound notes that if $s(x_k)\neq 0$, then $\delta_k(x_\mathtt{SL}) \geq \tau_\mathtt{SL}>0$ and (ii) the second sum's upper bound notes $L_1\alpha_k-2 \geq -2$ and if $s(x_k)=0$, then $\delta_k(x_\mathtt{SL}) \geq \inf h_{x_\mathtt{SL}} - h_{x_\mathtt{SL}}(x_\mathtt{SL})$, which may be negative.
Rearrangement gives the claim as
\begin{equation*}
    \mathbb{E}_\xi \left[\sum_{k< T: s(x_k)=0}\lambda_k\right]\geq \frac{ \tau_\mathtt{SL} \sum_{k=0}^{T-1} \lambda_k - L_0^2\sum_{k=0}^{T-1} \lambda_k\alpha_k - C_0}{2(h_{x_\mathtt{SL}}(x_\mathtt{SL}) - \inf h_{x_\mathtt{SL}}) + \tau_\mathtt{SL}} \ . \qedhere
\end{equation*}

\subsubsection{Proof of Theorem~\ref{thm:super-rate}} \label{subsec:proof-main-result}
Applying Lemma~\ref{lem:switching-proximal-primal-induction} with $y=x_\mathtt{OPT}$ from $k=0$ to $T-1$ yields
$$R_T(x_\mathtt{OPT}) + \sum_{k=0}^{T-1} \frac{\lambda_k}{2}\mathbb{E}_\xi\delta_k(x_\mathtt{OPT}) \leq R_0(x_\mathtt{OPT}) + \sum_{k=0}^{T-1}\frac{L_0^2\lambda_k\alpha_k}{2} + \sum_{k=0}^{T-1} \frac{\lambda_k}{2}\left(L_1\alpha_k - 1 \right)\mathbb{E}_\xi\delta_k(x_\mathtt{OPT})\ .$$
Similarly, applying Lemma~\ref{lem:switching-proximal-dual-induction} from $k=0$ to $T-1$ yields
\begin{align*}
    D_T &+ \sum_{k=0}^{T-1} \frac{\lambda_k}{2}(\mathbb{E}_\xi\delta_k(x_\mathtt{OPT})) + {\color{blue}\sum_{k<T:s(x_k)>0} \lambda_k\mathbb{E}_\xi f_{s(x_k)}(x_\mathtt{OPT})}\\
    &\leq D_0 + \sum_{k=0}^{T-1}\frac{L_0^2\lambda_k\alpha_k}{2} + \sum_{k=0}^{T-1} \frac{\lambda_k}{2}\left(L_1\alpha_k - 1 \right)\mathbb{E}_\xi\delta_k(x_\mathtt{OPT})\ .
\end{align*}
Noting $R_0(x_\mathtt{OPT})=D_0=0$ and the last summation in each bound is at most half our initial blow-up constant $\frac{1}{2}C_0$, the sum of these inequalities provides a bound of
$$ R_T(x_\mathtt{OPT}) + D_T + \sum_{k=0}^{T-1} \lambda_k\mathbb{E}_\xi\delta_k(x_\mathtt{OPT}) {\color{blue} + \sum_{k<T:s(x_k)>0} \lambda_k\mathbb{E}_\xi}f_{s(x_k)}(x_\mathtt{OPT}) \leq \sum_{k=0}^{T-1}L_0^2\lambda_k\alpha_k + C_0 \ . $$
Lower bounding each $\mathbb{E}_\xi[\delta_k(x_\mathtt{OPT}) {\color{blue}+  f_{s(x_k)}(x_\mathtt{OPT})}]$ with $s(x_k)\neq 0$ by zero {\color{blue} and dividing through by $\sum_{k=0}^{T-1} \lambda_k$ establishes that
$$ \frac{1}{\sum_{k=0}^{T-1} \lambda_k}\left(R_T(x_\mathtt{OPT}) + D_T + \sum_{k<T:s(x_k)=0} \lambda_k\mathbb{E}_\xi\delta_k(x_\mathtt{OPT})\right) \leq \frac{\sum_{k=0}^{T-1}L_0^2\lambda_k\alpha_k + C_0}{\sum_{k=0}^{T-1} \lambda_k} \ . $$
Substituting the definitions of $R_T$ and $D_T$ from~\eqref{eq:Rk-definition} and~\eqref{eq:Dk-definition}, the left-hand side above is equal to
$$ \mathbb{E}_\xi\left[\frac{\mu}{2}\|x_T-x_\mathtt{OPT}\|^2_2 + \frac{1}{{\sum_{k=0}^{T-1} \lambda_k}}\left(\sum_{k<T:s(x_k)=0} \lambda_k h_{x_\mathtt{OPT}}(x_k)- \inf M^{(T-1)}\right)\right]\ . $$
Recalling the primal and dual gaps definitions in~\eqref{eq:primalgap-definition} and~\eqref{eq:dualgap-definition} gives our main theorem.}

    \section{Numerical Experiments}\label{sec:numerics}
In this section, we numerically validate the accuracy of Theorem~\ref{thm:super-rate} in predicting actual observed performance. Our three main numerical experiments address the impact of varying $\lambda_k$, the quality of our new primal-dual stopping criteria, and the accuracy of our $T_0$ and $C_0$ constants at predicting initial divergences. All of our numerics are implemented in \texttt{Julia 1.8.5}\footnote{The source code is available at \url{https://github.com/AshleyLDL/Primal-Dual-Averaging-Coding}}.

We consider the following deterministic family of nonsmooth, non-Lipschitz, strongly convex minimization problems given $A, C \in \mathbb{R}^{m\times n}$ and $b, d \in \mathbb{R}^{m}$
\begin{equation}\label{eq:numerical-example}
    \min_{x\in\mathbb{R}^n} f_0(x)=\|A x - b\|_1 + \frac{1}{2}\|C x-d\|_2^2 \ . 
\end{equation}
Note $\|A x - b\|_1$ is $\|A^T\|_{\infty\rightarrow 2}$-Lipschitz\footnote{The Lipschitz constant for $\|A x - b\|_1$ follows from the chain rule as its subgradients are combinations of $A$'s rows with weights in $[-1,1]$, so the largest subgradient is $\max_{\|w\|_\infty\leq 1} \|A^Tw\|_2 = \|A^T\|_{\infty\rightarrow 2}$.}. However, computing this induced matrix norm is NP-hard~\cite{steinberg-hardness}, so we instead upper bound it by $\sum_{i=1}^m \|A_i\|$ where $A_i$ denotes $A$'s $i$th row. Further noting $\frac{1}{2}\|C x-d\|_2^2$ is $\lambda_{max}(C^TC)$-smooth, by Lemma~\ref{lem:AssumptionC-verification}, our Assumptions A-C hold with $L_0^2 = 8(\sum_{i=1}^m \|A_i\|)^2$, $L_1 = 4\lambda_{max}(C^TC)$ and $\mu = \lambda_{min}(C^TC)$. 
We generate problem instances fixing $m=n=100, x_0=0$ and randomly drawing $A, \tilde C, x_\mathtt{OPT}$ with i.i.d.~normal entries. To control $\mu$ and $L_1$, we set $C = I + \sigma\tilde C $ for various selections of $\sigma\geq 0$. When $\sigma=0$, we have $\mu=1$ and $L_1=4$. Initially as $\sigma$ increases, $\mu$ decreases while $L_1$ increases. To ensure $x_\mathtt{OPT}$ is a minimizer and $p_\star = 0$, we set $b = Ax_\mathtt{OPT}$, $d = Cx_\mathtt{OPT}$.

\subsection{Performance under Varied Stepsize Selections}
First, we aim to measure the quality of Theorem~\ref{thm:super-rate}'s bounds compared to actual convergence. We fix $\bar\beta=0$ and $\sigma=0$ and consider several polynomial selections of $\lambda_k$ and our proposed, optimized choice~\eqref{eq:optimal-lambda}. Figure~\ref{fig:observed values for different lambda} shows the upper bound from Theorem~\ref{thm:super-rate} in comparison to the observed convergence of the aggregate measure $\mathtt{primal\mbox{-}gap}_T + \mathtt{dual\mbox{-}gap}_T + \frac{\mu}{2}\|x_{T}-x_\mathtt{OPT}\|_2^2$ and each component separately. As expected, the optimized parameters~\eqref{eq:optimal-lambda} have the best theoretical bound and the best observed aggregate performance early on. Moreover, it remains one of the best methods throughout. Asymptotically, we see comparable convergence for all $\lambda_k\neq 1$. The primal convergence under uniform weights $\lambda_k=1$ was the slowest in line with our theory, which only guarantees a $O(\log(T)/T)$ rate. Uniform weights did yield the fastest convergence of the dual gap and distance to optimal, which our theory cannot explain.

\begin{figure}[t]
\centering
\includegraphics[width=0.8\textwidth]{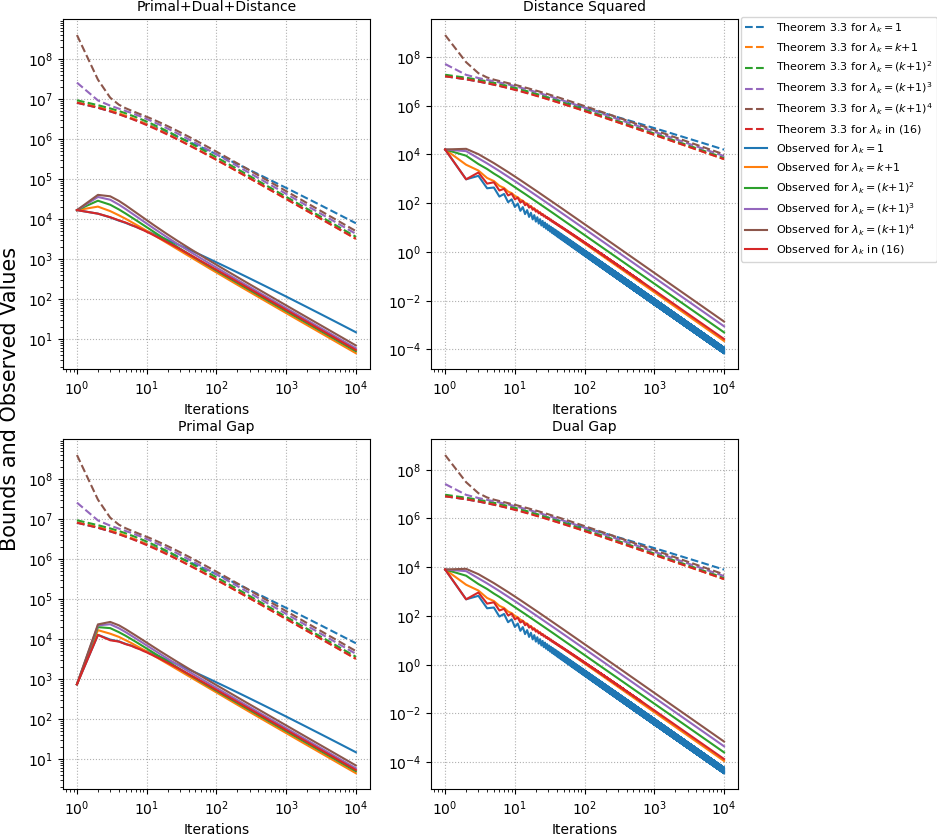}
\caption{Bounds and observed performance for different $\lambda_k$ with $\bar\beta=0$.}
\label{fig:observed values for different lambda}
\end{figure}

\subsection{High Accuracy of Primal-Dual Stopping Criteria} \label{subsubsec:numeric2}

One practical benefit of our dual characterizations of subgradient methods is the resulting computable dual lower bounds and hence stopping criteria, assuming $\mu$ is known. As a shorthand, denote the convergence of our dual lower bound on $p_\star$ by $d_k := p_\star - \inf M^{(k-1)}/\sum \lambda_i$. We denote the convergence of three natural upper bounds on $p_\star$ by the primal gap (averaging function values seen) $p_k := \sum \lambda_i f(x_i) / \sum \lambda_i-p_\star$, the function value at an averaged iterate $\bar p_k := f(\sum \lambda_i x_i/\sum \lambda_i)-p_\star$, and the function value at the latest iterate $\delta_k := f(x_k)-p_\star$. Combining these upper and lower bounds gives three natural stopping criteria to ensure an $\epsilon$-accurate solution is found: stopping once the gap between upper and lower bounds is less than $\epsilon$. Fixing $\sigma=0$ and $\epsilon=0.05$, Table~\ref{Stopping Criteria for Different Choices of lambda and beta} shows the number of iterations before these conditions were first reached.

Across every $\lambda_k\neq 1$ configuration, we see $\bar p_T$ and $d_T$ both converge relatively quickly. The stopping criteria $\bar p_t + d_t\leq \epsilon$ is consistently reached in at most $25\%$ more iterations than were required to reach $\bar p_t\leq \epsilon$. Hence, up to a small constant, this criterion matches the ideal time to stop. Note $\delta_t$ and $p_t$ both converged much slower than $\bar p_t$ and $d_t$. Correspondingly, the stopping criteria $\delta_t + d_t \leq \epsilon$ and $p_t+d_t\leq\epsilon$ are highly accurate, being reached in all of our experiments within one or two iterations of the first iteration with $\delta_t\leq \epsilon$ or $p_t\leq \epsilon$.

\begin{table}
\centering
\begin{tabular}{ |l||l|l|l|l|l|l|  }
 \hline
 \multicolumn{7}{|c|}{First $t$ satisfying the given stopping criteria} \\
 \hline
   Criteria  & $\lambda_k{=}1$ &$\lambda_k{=}k+1$&$\lambda_k{=}(k+1)^2$&$\lambda_k{=}(k+1)^3$&$\lambda_k{=}(k+1)^4$&$\lambda_k$ in~\eqref{eq:optimal-lambda}\\
 \hline
 $\bar p_t \leq \epsilon$ &1204821  & 1940 & 997 & 1331 & 1664 & 4122 \\
 $\bar p_t + d_t \leq \epsilon$   &1204821&2000 & 1223 & 1630 & 2038 & 4156\\
 \hline
 $\delta_t\leq \epsilon$&237426 &443222 & 664834 & 886445 & 1108056 & 533876\\
$\delta_t + d_t\leq \epsilon$ &237428&443223 & 664835 & 886446 & 1108058 & 533876\\
 \hline
 $p_t \leq \epsilon$ &4713468 &886456 & 997251 & 1181927 & 1385070 & 1067789\\
 $ p_t + d_t \leq \epsilon$ &4713468& 886456 & 997252 & 1181928 & 1385071 & 1067790\\
 \hline
 $d_t \leq \epsilon $ &263  &  470  & 705 & 941 &1176 & 509\\ \hline
\end{tabular}
\caption{Stopping times for different criteria and $\lambda_k$ with $\epsilon=0.05, \sigma=0$.}
\label{Stopping Criteria for Different Choices of lambda and beta}
\end{table}

\subsection{Accuracy of $C_0$ at Predicting Early Iterate Divergence} \label{subsubsec:numeric3}

\begin{table}
\centering
\begin{tabular}{ |p{1cm}|p{1.5cm}|p{1.5cm}|p{1.5cm}|p{1.6cm}|p{1.6cm}|p{1.7cm}|  }
 \hline
 \multicolumn{7}{|c|}{Conditioning of problems~\eqref{eq:numerical-example} as $\sigma$ varies} \\
 \hline
$\sigma$ & 0 & 0.0001 & 0.001 & 0.01 & 0.02 & 0.05\\
 $L_1/\mu$   & 4   & 4.022 & 4.224 & 6.911 & 12.107 &81.179  \\
 $T_0$   &   6  & 7 & 7 & 12 & 23 & 161\\
 $C_0$   &$1.472{\times}10^{5}$ & $1.497{\times}10^{5}$ & $1.735{\times}10^{5}$ & $6.985{\times}10^{5}$ & $3.770{\times}10^{6}$ & $2.663{\times}10^{23}$\\
 \hline
\end{tabular}
\caption{Effects of $\sigma$ on problem conditioning measured by $L_1/\mu$ and consequently the duration and amount of early divergences measured by $T_0$ and $C_0$ with $\alpha_k = 2/\mu(k+2)$.}
\label{Effects for Different Sigma}
\end{table}

Lastly, we consider settings where the initial iterates diverge rapidly, which our theory addresses via the inclusion of the constant $C_0$, defined in~\eqref{eq:C0}. Here, we have defined $C=\sigma\tilde C + I$, for a randomly Gaussian sampled $\tilde C$. As a result, the constants $\mu = \lambda_{min}(C^TC)$ and $L_1  = 4\lambda_{max}(C^TC)$ depend on $\sigma$. In Table~\ref{Effects for Different Sigma}, we show the effect $\sigma$ varying from $0$ to $0.05$, causing the condition number $L_1/\mu$ to grow moderately. As a result, we see $T_0$ grow linearly in $L_1/\mu$ and $C_0$ grows exponentially, exceeding $10^{23}$.

For such problems, our theory predicts the subgradient method with $\alpha_k=2/\mu(k+2)$ may diverge in the first $T_0$ iterations but should eventually converge at least a $O(1/T)$ rate. Figure~\ref{fig:observed values for different sigma} numerically confirms this prediction with every performance measure exponentially growing to at least $10^{16}$ as $\sigma$ grows and a decreasing trend beginning before iteration $T_0$. We see $\bar p_k$, $\delta_k$ and $R_k^2(x_\mathtt{OPT})$ rapidly converge after $T_0$, whereas $p_k$ and $d_k$ only decrease sublinearly. This slow convergence is likely due to $p_k$ and $d_k$ being defined as weighted averages, which must slowly dilute the effects of early ``bad'' iterations. Our theory predicts such exponential divergences can be avoided by ensuring $T_0$ (and hence $C_0$) are small. For example, setting $\alpha_0=1/\mu$ and then $\alpha_k = \min\{1/L_1, 2/\mu(k+2)\}$ thereafter rather than $\alpha_k=2/\mu(k+2)$ as above ensures $T_0=0$. Figure~\ref{fig:observed values for different sigma after modification on lambda} verifies this mitigates the previous diverging behavior.

\begin{figure}
\centering
\includegraphics[width=0.8\textwidth]{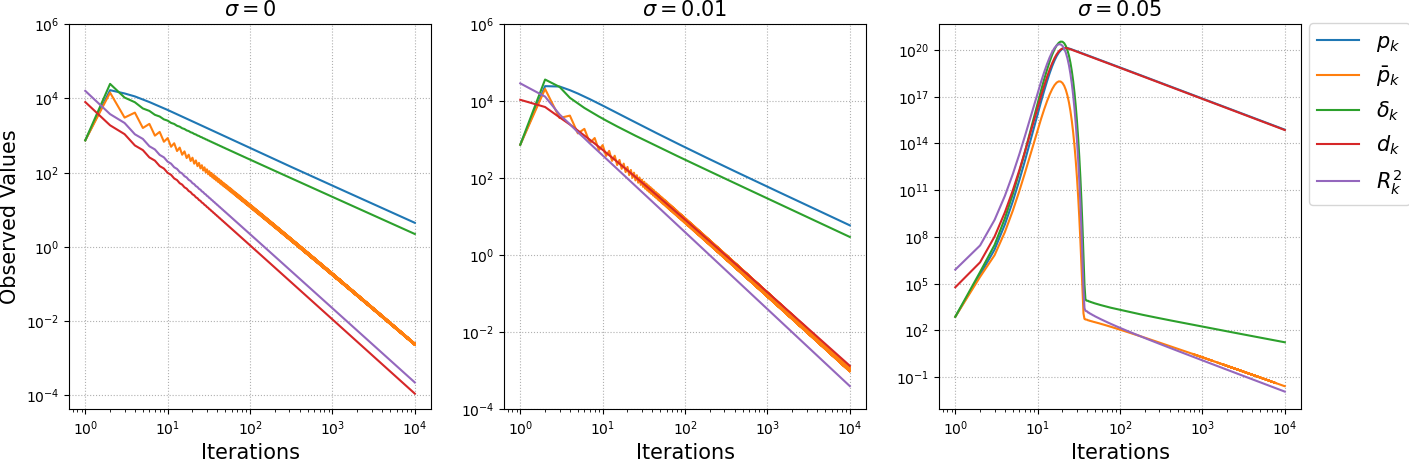}
\caption{Observed performance for various $\sigma$ with $\alpha_k = 2/\mu(k+2)$.}
\label{fig:observed values for different sigma}
\end{figure}

\begin{figure}
\centering
\includegraphics[width=0.8\textwidth]{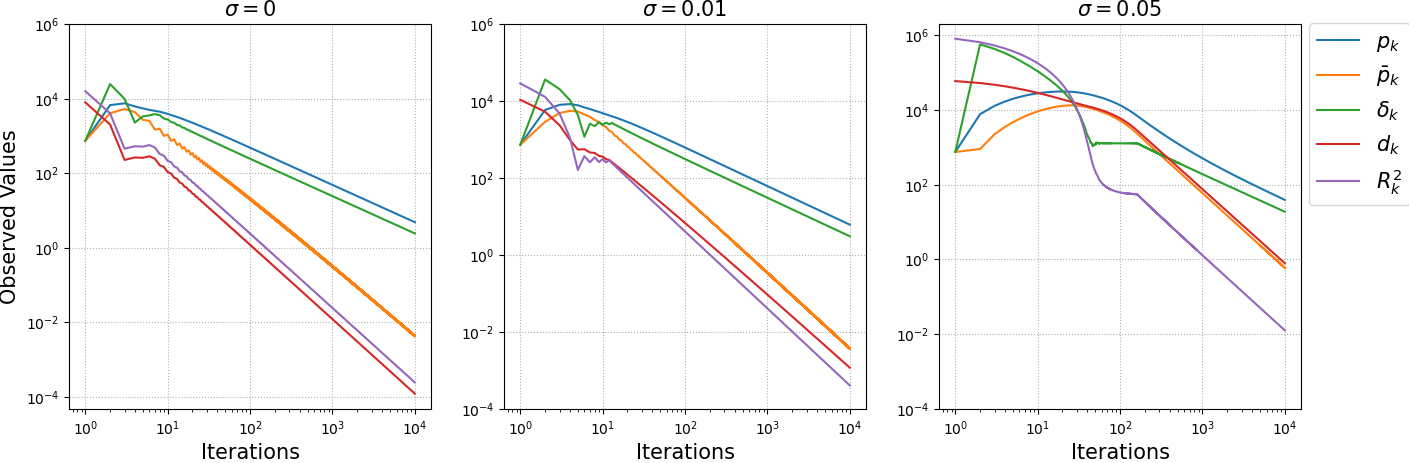}
\caption{Observed performance for various $\sigma$ with $\alpha_0=1/\mu$, $\alpha_k= \min\{1/L_1, 2/\mu(k+2)\}$, for $k>0$, with corresponding $\lambda_0 = 1$, $\lambda_k=\frac{\alpha_{k}}{1-\mu\alpha_k}\frac{\lambda_{k-1}}{\alpha_{k-1}}$ and well-controlled $T_0=0$.}
\label{fig:observed values for different sigma after modification on lambda}
\end{figure}

    {\color{blue} \paragraph{Acknowledgements.} This manuscript's presentation and clarity were greatly improved by the careful thoughts and advice of two anonymous referees to whom the authors are very grateful. 
    }
    {\small
    \bibliographystyle{unsrt}
    \bibliography{bibliography}

\begin{thebibliography}{10}

\bibitem{Shor:Subgradient}
N.~Z. Shor, Krzysztof~C. Kiwiel, and Andrzej Ruszcay\`{n}ski.
\newblock {\em Minimization methods for non-differentiable functions}.
\newblock Springer-Verlag New York, Inc., New York, NY, USA, 1985.

\bibitem{Nedic2001}
Angelia Nedic and Dimitri~P. Bertsekas.
\newblock Incremental subgradient methods for nondifferentiable optimization.
\newblock {\em SIAM Journal on Optimization}, 12(1):109--138, 2001.

\bibitem{Nesterov2005PrimaldualSM}
Yurii Nesterov.
\newblock Primal-dual subgradient methods for convex problems.
\newblock {\em Mathematical Programming}, 120:221--259, 2005.

\bibitem{beck2009fast}
Amir Beck and Marc Teboulle.
\newblock A fast iterative shrinkage-thresholding algorithm for linear inverse problems.
\newblock {\em SIAM Journal on Imaging Sciences}, 2(1):183--202, 2009.

\bibitem{Yang2015RSGBS}
Tianbao Yang and Qihang Lin.
\newblock Rsg: Beating subgradient method without smoothness and strong convexity.
\newblock {\em J. Mach. Learn. Res.}, 19:6:1--6:33, 2015.

\bibitem{Grimmer2017RadialSM}
Benjamin Grimmer.
\newblock Radial subgradient method.
\newblock {\em SIAM J. Optim.}, 28:459--469, 2017.

\bibitem{Johnstone2020}
Patrick~R. Johnstone and Pierre Moulin.
\newblock Faster subgradient methods for functions with h\"{o}lderian growth.
\newblock {\em Math. Program.}, 180(1–2):417–450, mar 2020.

\bibitem{Lu2017RelativeCF}
Haihao Lu.
\newblock "relative continuity" for non-lipschitz nonsmooth convex optimization using stochastic (or deterministic) mirror descent.
\newblock {\em INFORMS J. Optim.}, 1:288--303, 2017.

\bibitem{Renegar2022}
James Renegar and Benjamin Grimmer.
\newblock A simple nearly optimal restart scheme for speeding up first-order methods.
\newblock {\em Found. Comput. Math.}, 22(1):211–256, feb 2022.

\bibitem{Polyak1967}
B.~T. Polyak.
\newblock A general method of solving extremum problems.
\newblock {\em Sov. Math., Dokl.}, 8:593--597, 1967.

\bibitem{Lacoste-Julien2012}
Simon Lacoste{-}Julien, Mark Schmidt, and Francis~R. Bach.
\newblock A simpler approach to obtaining an o(1/t) convergence rate for the projected stochastic subgradient method.
\newblock {\em arxiv:1212.2002}, 2012.

\bibitem{Deng2015RandomizedBS}
Qi~Deng, Guanghui Lan, and Anand Rangarajan.
\newblock Randomized block subgradient methods for convex nonsmooth and stochastic optimization.
\newblock {\em arxiv:1509.04609}, 2015.

\bibitem{Nesterov-Introductory-Lectures}
Yurii Nesterov.
\newblock {\em Introductory Lectures on Convex Optimization: A Basic Course}.
\newblock Springer, New York, NY, 2014.

\bibitem{Xiao2009}
Lin Xiao.
\newblock Dual averaging method for regularized stochastic learning and online optimization.
\newblock In Y.~Bengio, D.~Schuurmans, J.~Lafferty, C.~Williams, and A.~Culotta, editors, {\em Advances in Neural Information Processing Systems}, volume~22, New York, 2009. Curran Associates, Inc.

\bibitem{Chen2012}
Xi~Chen, Qihang Lin, and Javier Pe\~{n}a.
\newblock Optimal regularized dual averaging methods for stochastic optimization.
\newblock In {\em Proceedings of the 25th International Conference on Neural Information Processing Systems - Volume 1}, NIPS'12, page 395–403, Red Hook, NY, USA, 2012. Curran Associates Inc.

\bibitem{Jumutc2014ReweightedL2}
Vilen Jumutc and Johan A.~K. Suykens.
\newblock Reweighted l 2-regularized dual averaging approach for highly sparse stochastic learning.
\newblock In {\em International Symposium on Neural Networks}, 2014.

\bibitem{Siegel2019ExtendedRD}
Jonathan~W. Siegel and Jinchao Xu.
\newblock Extended regularized dual averaging methods for stochastic optimization.
\newblock {\em Journal of Computational Mathematics}, 41(3):525--541, 2023.

\bibitem{Tan2020}
Conghui Tan, Yuqiu Qian, Shiqian Ma, and Tong Zhang.
\newblock Accelerated dual-averaging primal–dual method for composite convex minimization.
\newblock {\em Optimization Methods and Software}, 35(4):741--766, 2020.

\bibitem{Lan2020Switching}
Guanghui Lan and Zhiqiang Zhou.
\newblock Algorithms for stochastic optimization with function or expectation constraints.
\newblock {\em Comput. Optim. Appl.}, 76(2):461–498, jun 2020.

\bibitem{Bayandina2018}
Anastasia Bayandina, Pavel Dvurechensky, Alexander Gasnikov, Fedor Stonyakin, and Alexander Titov.
\newblock {\em Mirror Descent and Convex Optimization Problems with Non-smooth Inequality Constraints}, pages 181--213.
\newblock Springer International Publishing, Cham, 2018.

\bibitem{Alkousa2020}
Mohammad~S. Alkousa.
\newblock {\em On Modification of an Adaptive Stochastic Mirror Descent Algorithm for Convex Optimization Problems with Functional Constraints}, pages 47--63.
\newblock Springer Singapore, Singapore, 2020.

\bibitem{Titov2020}
Alexander~A. Titov, Fedor~S. Stonyakin, Mohammad~S. Alkousa, Seydamet~S. Ablaev, and Alexander~V. Gasnikov.
\newblock Analogues of switching subgradient schemes for relatively lipschitz-continuous convex programming problems.
\newblock In Yury Kochetov, Igor Bykadorov, and Tatiana Gruzdeva, editors, {\em Mathematical Optimization Theory and Operations Research}, pages 133--149, Cham, 2020. Springer International Publishing.

\bibitem{jia2023firstorder}
Zhichao Jia and Benjamin Grimmer.
\newblock First-order methods for nonsmooth nonconvex functional constrained optimization with or without slater points.
\newblock {\em arxiv:2212.00927}, 2023.

\bibitem{huang2023singleloop}
Yankun Huang and Qihang Lin.
\newblock Single-loop switching subgradient methods for non-smooth weakly convex optimization with non-smooth convex constraints.
\newblock {\em arxiv:2301.13314}, 2023.

\bibitem{Pena2017}
Javier Peña.
\newblock Convergence of first-order methods via the convex conjugate.
\newblock {\em Operations Research Letters}, 45(6):561--564, 2017.

\bibitem{Gutman2018}
David~Huckleberry Gutman and Javier~F. Pena.
\newblock Convergence rates of proximal gradient methods via the convex conjugate.
\newblock {\em SIAM J. Optim.}, 29:162--174, 2018.

\bibitem{Gutman2022}
David~H. Gutman and Javier~F. Pe\~{n}a.
\newblock Perturbed fenchel duality and first-order methods.
\newblock {\em Math. Program.}, 198(1):443–469, feb 2022.

\bibitem{Diakonikolas2019ContinuousDualGap}
Jelena Diakonikolas and Lorenzo Orecchia.
\newblock The approximate duality gap technique: A unified theory of first-order methods.
\newblock {\em SIAM J. on Optimization}, 29(1):660–689, jan 2019.

\bibitem{Rakhlin2012}
Alexander Rakhlin, Ohad Shamir, and Karthik Sridharan.
\newblock Making gradient descent optimal for strongly convex stochastic optimization.
\newblock In {\em Proceedings of the 29th International Conference on International Conference on Machine Learning}, ICML'12, page 1571–1578, Madison, WI, USA, 2012. Omnipress.

\bibitem{Shamir2013}
Ohad Shamir and Tong Zhang.
\newblock Stochastic gradient descent for non-smooth optimization: Convergence results and optimal averaging schemes.
\newblock In {\em Proceedings of the 30th International Conference on International Conference on Machine Learning - Volume 28}, ICML'13, page I–71–I–79, 2013.

\bibitem{Gustavsson2015}
E.~Gustavsson, M.~Patriksson, and AB~Str\"omberg.
\newblock Primal convergence from dual subgradient methods for convex optimization.
\newblock {\em Mathematical Programming}, 150:365--390, 2015.

\bibitem{Metel2021}
M.R. Metel and A.~Takeda.
\newblock Primal-dual subgradient method for constrained convex optimization problems.
\newblock {\em Optimization Letters}, 15:1491--1504--390, 2021.

\bibitem{Grimmer2019}
Benjamin Grimmer.
\newblock Convergence rates for deterministic and stochastic subgradient methods without lipschitz continuity.
\newblock {\em SIAM Journal on Optimization}, 29(2):1350--1365, 2019.

\bibitem{RockWets98}
{R. Tyrrell} Rockafellar and Roger J.-B. Wets.
\newblock {\em Variational Analysis}.
\newblock Springer Verlag, Heidelberg, Berlin, New York, 1998.

\bibitem{steinberg-hardness}
Daureen Steinberg.
\newblock Computaiton of matrix norms with applications to robust optimization.
\newblock Master's thesis, Israel Institute of Technology, 2005.

\end{thebibliography}

    \appendix\section{Deferred Proof of Lemma~\ref{lem:AssumptionC-verification}}
    Consider either $y\in\{x_{\mathtt{OPT}}, x_{\mathtt{SL}}\}$.
    Suppose first $x_k$ is feasible and let $\bar g_k =  \mathbb{E}_{\xi_k} g_0(x_k;\xi_k) + n_y\in\partial h_y(x_k)$.
    Fix any $g_y \in \partial h_y(y)$. Note by the sum rule of subdifferential calculus, both $\bar g_k - \nabla f^{(2)}_0(x_k) -n_y$ and $g_y - \nabla f^{(2)}_0(y) -n_y$ are subgradients of $f^{(1)}_0$ and hence both have norm bounded by $M$.
    Consider the $L$-smooth function
    $$\hat{h}_y(x) = f_0^{(2)}(x) + f^{(1)}_0(y) + \langle g_y - \nabla f_0^{(2)}(y), x-y\rangle {\color{blue} +r(y)} \ . $$
    Note since {\color{blue} $f_0^{(1)}$ is convex with} $g_y - \nabla f_0^{(2)}(y) - n_y \in\partial f^{(1)}_0(y)$, $h_y \geq \hat{h}_y$ and $h_y(y)=\hat{h}_y(y)$.
    Then one has
    \begin{align*}
        &\mathbb{E}_{\xi_k}\|g_0(x_k;\xi_k) + n_{y}\|_2^2\\
        &= \|\bar g_k\|_2^2 + \mathbb{E}_{\xi_k}\|g_0(x_k;\xi_k) {\color{blue} + n_y} - \bar g_k\|_2^2 \\ 
        & \leq 3\|\nabla \hat{h}_y(x_k)\|_2^2 + 3\|\bar g_k - \nabla f^{(2)}_0(x_k) -n_y\|^2 + 3\|g_y - \nabla f^{(2)}_0(y) -n_y\|_2^2 +\sigma^2 \\
        &\leq 6L(\hat{h}_y(x_k) - \inf \hat{h}_y) + 6M^2 + \sigma^2 \\
        &\leq 6L\delta_k(y) + 6L(h_y(y) - \inf \hat{h}_y) + 6M^2 + \sigma^2 
    \end{align*}
    where the first inequality bounds $\|a+b+c\|^2_2$ by $3\|a\|^2_2+3\|b\|^2_2+3\|c\|^2_2$ and uses the assumed variance bound, the second inequality uses smoothness to bound the first term as $\hat{h}_y(x_k) - \frac{1}{2L}\|\nabla \hat{h}_y(x_k)\|^2 \geq \inf \hat{h}_y$ and the $M$-Lipschitzness of $f^{(1)}_0$ to bound the second and third terms, and the final inequality adds and subtracts ${\color{blue}6L}h_y(y)$ and upper bounds $\hat{h}_y(x_k)$ by $h_y(x_k)$.
    
    Similarly, now suppose $x_k$ is infeasible and let $\bar g_k =  \mathbb{E}_{\xi_k} g_{s(x_k)}(x_k;\xi_k) \in\partial f_{s(x_k)}(x_k)$.
    Fix any $g_y \in \partial f_{s(x_k)}(y)$. Note by the sum rule of subdifferential calculus, both $\bar g_k - \nabla f^{(2)}_{s(x_k)}(x_k)$ and $g_y - \nabla f^{(2)}_{s(x_k)}(y)$ are subgradients of $f^{(1)}_{s(x_k)}$ and hence both have norm bounded by $M$.
    Consider the $L$-smooth function
    $$\hat{f}_{s(x_k)}(x) = f_{s(x_k)}^{(2)}(x) + f^{(1)}_{s(x_k)}(y) + \langle g_y - \nabla f_{s(x_k)}^{(2)}(y), x-y\rangle \ . $$
    Note since $g_y - \nabla f_{s(x_k)}^{(2)}(y) \in\partial f^{(1)}_{s(x_k)}(y)$, $f_{s(x_k)} \geq \hat{f}_{s(x_k)}$ and $f_{s(x_k)}(y)=\hat{f}_{s(x_k)}(y)$.
    Then, identical reasoning to that above gives
    \begin{align*}
        &\mathbb{E}_{\xi_k}\|g_0(x_k;\xi_k)\|_2^2\\
        &= \|\bar g_k\|_2^2 + \mathbb{E}_{\xi_k}\|g_{s(x_k)}(x_k;\xi_k) - \bar g_k\|_2^2 \\ 
        & \leq 3\|\nabla \hat{f}_{s(x_k)}(x_k)\|_2^2 + 3\|\bar g_k - \nabla f^{(2)}_{s(x_k)}(x_k)\|^2 + 3\|g_y - \nabla f^{(2)}_{s(x_k)}(y)\|_2^2 +\sigma^2 \\
        &\leq 6L(\hat{f}_{s(x_k)}(x_k) - \inf \hat{f}_{s(x_k)}) + 6M^2 + \sigma^2 \\
        &\leq 6L\delta_k(y) + 6L(f_{s(x_k)}(y) - \inf \hat{f}_{s(x_k)}) + 6M^2 + \sigma^2 \ .
    \end{align*}
    Hence, Assumption C holds with
    \begin{align*}
        L_0^2 &= 6M^2 + \sigma^2 + 6L\max_{y\in\{x_{\mathtt{OPT}}, x_{\mathtt{SL}}\}}\max_{s=1\dots m}\left\{h_y(y) - \inf \hat{h}_y, f_{s}(y) - \inf \hat{f}_{s}\right\}\\
        L_1 &=6L\ .
    \end{align*}
    }
\end{document}